\author{Serdyuk A.S., Shidlich A.L.}
\title{Direct and inverse theorems on the approximation of almost periodic functions in  Besicovitch-Stepanets spaces}
\shorttitle{Direct and inverse theorems on the approximation of almost periodic functions...}
\institute{Institute of Mathematics of the National Academy of Sciences of Ukraine, 3, Tereshchenkivska str., Kyiv, Ukraine, 01024 
}
\email{sanatolii@ukr.net (Serdyuk A.S.), shidlich@gmail.com (Shidlich A.L.)}
\subjclass{42A75, 41A17, 41A25 26A15}
\begin{document}

\maketitle


\section*{Introduction} In the paper, in terms of the best approximations and generalized moduli of  smoothness,
direct and inverse approximation theorems are established for Besicovitch almost periodic functions, the sequences of Fourier exponents of which have a single boundary point in infinity, and the  sums of  the $p$th degrees of absolute values of the Fourier coefficients are finite. Study of direct and inverse approximation theorems originates in the well-known papers of Jackson \cite{Jackson_1911}
and Bernstein \cite{Bernstein_1912}. Such theorems establish connections between the difference-differential properties of the function that is approximated and the value of the error of its approximation by various methods (see, for examples the monographs  \cite{A_Timan_M1960}, \cite{Butzer_Nessel_M1971},  \cite{Korneichuk_1987}, \cite{Stepanets_M2005}, etc.). In 1962 Korneichuk
\cite{Korneichuk_1962} proved Jackson inequality   in the uniform metric with the least (exact) constant.
In \cite{Chernykh_1967} Chernykh showed that for an arbitrary $2\pi$-periodic square-summable non-constant
function $f$ ($f\in L_2$),
  \begin{equation}\label{Chernykh_result}
E_n(f)_2<   2^{-1/2}\omega (f,\pi/n)_2, \quad n=1,2,\ldots \ (n\in {\mathbb N}),
\end{equation}
where $E_n(f)_2$ is the best mean-square approximation of the function $f$ by trigonometric polynomials of  order $n-1$,
and $\omega(f, t)_2 $ is the modulus of continuity (modulus of smoothness of the first order) of $f$ in the space $L_2$.
In  \cite{Chernykh_1967_MZ}, the unimprovable Jackson-type inequalities with averaged moduli of smoothness with some weight functions were established.

In spaces of almost periodic functions, direct approximation theorems were established in the papers \cite{Bredikhina_1968, Prytula_1972, Prytula_Yatsymirskyi_1983, Babenko_Savela_2012}, etc. In particular, in \cite{Prytula_1972}, an analogue of the inequality (\ref{Chernykh_result})  was obtained for  Besicovitch almost periodic functions of the order $2$ ($B_2$-a.p. functions).  In \cite{Prytula_Yatsymirskyi_1983} and \cite{Babenko_Savela_2012},   Jackson type inequalities  were obtained, respectively, with moduli of smoothness  of $B_2$-a.p. functions of arbitrary positive integer order
and with generalized moduli of smoothness.

In this paper, we consider the spaces $B {\mathcal S}^p$ of all functions that are  Besicovitch almost periodic of order $1$
($B$-a.p. Functions) for which  the sums of the $p$th degrees of absolute values of their  Fourier coefficients are finite,  $1\le p<\infty$.
The norm of a function  in the spaces $B {\mathcal S}^p$  is defined as the usual norm of a sequence of its Fourier coefficients in the space of numerical sequences $l_p$.

In the case when the $2\pi$-periodic Lebesgue summable functions were considered instead of the $B$-a.p. functions,
similar spaces were studied in the papers of Stepants and his followers, and they were denoted by  ${\mathcal S}^p$   \cite{Stepanets_2001, Stepanets_Serdyuk_2002, Voicexivskij_2002, Serdyuk_2003, Vakarchuk_2004}, \cite[Ch.~11]{Stepanets_M2005}, \cite{Abdullayev_Serdyuk_Shidlich_2021, Serdyuk_Shidlich_2021}, etc. In \cite{Stepanets_Serdyuk_2002},
direct and inverse theorems for the approximation of functions from the spaces ${\mathcal S}^p$
were proved in terms of their best approximations by trigonometric polynomials and moduli of smoothness of
 arbitrary positive  orders. In \cite{Abdullayev_Serdyuk_Shidlich_2021}, exact Jackson-type inequalities
  in the spaces  ${\mathcal S}^p$ were obtained in terms of the best approximations of functions and
  the averaged values of their generalized moduli of smoothness as well as the exact values  were found for widths
  of classes of $2\pi$-periodic functions defined by certain conditions on the averaged values of their generalized moduli of smoothness.

The spaces $B {\mathcal S}^p$ are a natural generalization of both the spaces ${\mathcal S}^p$ (since ${\mathcal S}^p\subset B {\mathcal S}^p$) and spaces $B_2$-a.p. functions (since the sets of $B_2$-a.p. functions coincide with the sets $B{\mathcal S}^2$). Therefore, it is of interest to obtain direct and inverse  theorems on the approximation of
functions from the spaces $B {\mathcal S}^p$ in terms of their best approximations and generalized moduli of smoothness.

\section{Preliminaries}
Let  $ {B}^s$,  $ 1\le s<\infty$, be the space of all functions  Lebesgue summable with the $s$th degrees in each finite interval of the real axis, in which the distance is defined by the equality
 \[
 D_{_{\scriptstyle  B^s}}(f,g)=\Big(\mathop{\overline{\lim}}\limits_{T\to \infty}\frac 1{2T}\int_{-T}^T
 |f(x)-g(x)|^s {\mathrm d}x\Big)^{1/s}.
 \]
 Further, let ${\mathscr T}$ be the set of all trigonometric sums of the form  $\tau_N(x)=\sum_{k=1}^N a_k {\mathrm e}^{{\mathrm i} \lambda_kx}$, $N\in {\mathbb N}$, where $\lambda_k$ and $a_k$ are arbitrary real and complex numbers ($\lambda_k\in {\mathbb R}$, $a_k\in {\mathbb C}$).

An arbitrary function $f$ is called a Besicovitch almost periodic function of order $s$ (or $B^s$-a.p. function) and is denoted by $f\in B^s$-a.p.  \cite[Ch.~5, \S10]{Levitan_M1953}, \cite[Ch.~2, \S7]{Besicovitch_M1955}, if there exists a sequence of trigonometric sums
$\tau_1, \tau_2, \ldots$ from the set  ${\mathscr T}$ such that
 \[
 \lim_{N\to \infty} D_{_{\scriptstyle  B^s}}(f,\tau_N)=0.
 \]

If $s_1\ge s_2\ge 1$, then (see, for example, \cite{Bredikhina_1968,  Bredikhina_1984}) $B^{s_1}$-a.p.$\subset B^{s_2}$-a.p.$\subset B$-a.p.,
where  $B$-a.p.$:=B^1$-a.p.   For any $B$-a.p. function $f$, there exists the average value
 \[
 M\{f\}=\lim\limits_{T\to \infty} \frac 1 T \int_0^T f(x){\mathrm d}x.
 \]
The value of the function $M\{f(\cdot) {\mathrm e}^{-{\mathrm i}\lambda \cdot} \}$, $\lambda\in {\mathbb R}$, can be nonzero at most on a countable set.  As a result of numbering the values of this set in an arbitrary order, we obtain a set ${\mathscr S}(f)=\{\lambda_k\}_{k\in {\mathbb N}}$  of Fourier exponents, which is called the spectrum of the function $f$.
The numbers  $A_{\lambda_k}=A_{\lambda_k}(f)=M\{f(\cdot) {\mathrm e}^{-{\mathrm i}\lambda_k \cdot} \}$ are called the Fourier coefficients of the function  $f$. To each function $f\in B$-a.p. with spectrum ${\mathscr S}(f)$  there corresponds a
Fourier series of the form
 $
 \sum_k A_{\lambda_k} {\mathrm e}^{ {\mathrm i}\lambda_k x}.
$
If, in addition, $f\in B_2$-a.p., then the Parseval equality holds (see, for example, \cite[Ch.~2, \S9]{Besicovitch_M1955})
 \[
 M\{|f|^2\}=\sum_{k\in {\mathbb N}} |A_{\lambda_k}|^2.
 \]

Developing the ideas of Stepanets \cite{Stepanets_2001}, for a fixed  $ 1\le p<\infty$ we consider the spaces of all functions
$f\in B$-a.p., for which the following quantity is finite:
 \begin{equation}\label{norm_Sp}
 \|f\|_{_{\scriptstyle  p}}:=
 \|f\|_{_{\scriptstyle  B{\mathcal S}^p}} =\|\{A_{\lambda_k}(f)\}_{k\in {\mathbb N}}\|_{_{\scriptstyle  l_p({\mathbb N})}} =
 \Big(\sum_{k\in {\mathbb N}}|A_{\lambda_k}(f)|^p\Big)^{1/p}.
\end{equation}
We denote these spaces by $B{\mathcal S}^p$  and call them Besicovitch-Stepanets spaces. By definition, $B$-a.p. functions are considered identical in $B{\mathcal S}^p$ if they have the same Fourier series.

Further, we will consider only those almost periodic functions from the spaces $B{\mathcal S}^p$, the sequences of Fourier exponents of which have a single limit point at infinity. For such functions $f$,  the Fourier series are written in
the symmetric form:
 \begin{equation}\label{Fourier_Series}
  S[f](x)=\sum _{k \in {\mathbb Z}}
  A_{k} {\mathrm e}^{ {\mathrm i}\lambda_k x},\quad  \mbox{\rm  where }\
  A_{k}=A_{k}(f)=M\{f(\cdot) {\mathrm e}^{-{\mathrm i}\lambda_k \cdot}\},
\end{equation}
$\lambda_0:=0$, $\lambda_{-k}=-\lambda_k$,  $|A_k|+|A_{-k}|>0$, $\lambda_{k+1}>\lambda_k>0$ при $k>0$.

By $G_{\lambda_n}$  we denote the set of all $B$-a.p. functions whose Fourier exponents belong to the interval
$(-\lambda_n,\lambda_n)$  and define the value of the best approximation by the equality
 \begin{equation}\label{Best_Approximation}
 E_{\lambda_n}(f)_{_{\scriptstyle p}} = E_{\lambda_n}(f)_{_{\scriptstyle  B{\mathcal S}^p}}  =\inf\limits_{g\in G_{\lambda_n}}
 \|f-g \|_{_{\scriptstyle p}}.
\end{equation}


Let $\Phi$ be the set  of all continuous nonnegative pair functions
$\varphi(t)$ such that $\varphi(0)=0$ and the Lebesgue measure of the set $\{t\in {\mathbb R}:\,\varphi(t)=0\}$
is equal to zero. For an arbitrary fixed $\varphi\in \Phi$, consider the generalized modulus of smoothness of the
function $f\in  B{\mathcal S}^p $
\begin{equation}\label{general_modulus}
    \omega_\varphi(f,\delta)_{_{\scriptstyle p}}
  :=\omega_\varphi(f,\delta)_{_{\scriptstyle   B{\mathcal S}^p}}  =\sup\limits_{|h|\le \delta} \Big(\sum_{k\in {\mathbb Z}}
\varphi^p(\lambda_kh)|  A_k(f) |^p\Big)^{1/p},\quad  \delta\ge 0.
\end{equation}

Let ${\mathcal M}=\{\mu_j\}_{j=0}^m$ be a nonzero  collection of complex numbers such that $\sum_{j=0}^m \mu_k=0$. We
associate the collection ${\mathcal M}$ with the difference operator $\Delta_h^{\mathcal M}f(t)=\sum_{j=0}^m \mu_j f(t-jh)$
and the modulus of smoothness
 \[
    \omega_{\mathcal M}(f,\delta)_{_{\scriptstyle p}} :=
    \sup\limits_{|h|\le \delta}\|\Delta_h^{\mathcal M} f\|_{_{\scriptstyle p}}.
 \]
Note that the collection ${\mathcal M}(m)=\Big\{\mu_j=(-1)^j {m \choose j}, \ j=0,1,\ldots, m\Big\}$, $m\in {\mathbb N}$, corresponds to the classical modulus of smoothness of order  $m$:  $\omega_{{\mathcal M}(m)}(f,\delta)_{_{\scriptstyle p}}=\omega_m(f,\delta)_{_{\scriptstyle p}}$.

For any  ${k}\in {\mathbb Z}$, the Fourier coefficients of the function $\Delta_h^Mf$ satisfy the equality
\[
|A_k(\Delta_h^{\mathcal M}f)|=|A_k(f)| \Big|\sum_{j=0}^m \mu_j {\mathrm e}^{-{\mathrm i}\lambda_kjh}\Big|.
\]
Therefore, for $\varphi_{\mathcal M}(t)=\Big|\sum_{j=0}^m \mu_j {\mathrm e}^{-{\mathrm i} jt}\Big|$ we have
 $
 \omega_{\varphi_{\mathcal M}}(f,\delta)_{_{\scriptstyle p}} =
 \omega_{\mathcal M}(f,\delta)_{_{\scriptstyle p}}.
 $
In particular, for  $\varphi_m(t)= 2^m   |\sin (t/2) |^m = 2^{\frac { m   }2} (1-\cos t)^{\frac m2}$, $m\in {\mathbb N}$, we have
$
 \omega_{\varphi_m}(f,\delta)_{_{\scriptstyle p}} =
 \omega_m(f,\delta)_{_{\scriptstyle p}},
 $

In the general case, such modules were studied  in
\cite{Boman_1980, Vasil'ev_2001, Kozko_Rozhdestvenskii_2004, Vakarchuk_2016, Babenko_Savela_2012, Babenko_Konareva_2019, Abdullayev_Chaichenko_Shidlich_2021, Abdullayev_Serdyuk_Shidlich_2021}, etc.



\section{Main results}

\subsection{Jackson type inequalities}
In this subsection, direct approximation theorems are established in the space $B{\mathcal S}^p$ in terms of best
approximations and generalized moduli of smoothness. For functions $f\in B{\mathcal S}^p$ with the Fourier series of the form (\ref{Fourier_Series}), we prove Jackson type inequalities of the kind as
 $$
 E_{\lambda_n}(f)_{_{\scriptstyle p}}\le
 K(\tau )\omega _\varphi\Big(f, \frac {\tau }{\lambda_n}\Big)_{_{\scriptstyle p}}, \quad \tau >0, \quad 1\le p<\infty, \quad n\in {\mathbb N},
 $$
and  consider the problem of the least constant in these inequalities for fixed values of
the parameters $n$, $\varphi$, $\tau$ and $p$. In particular, we study the quantity
 \[
 K_{n,\varphi,p}(\tau )=
  \sup \bigg \{\frac {E_{\lambda_n}(f)_{_{\scriptstyle p}}}{\omega _\varphi(f, \frac {\tau}{\lambda_n})_{_{\scriptstyle p}}}\ \ :
  f\in B{\mathcal S}^p\bigg\}.
 \]
Here and below, we assume that $0/0=0$.

Let $V(\tau)$, $\tau>0$, be a set of bounded nondecreasing functions $v $ that differ from a constant on $[0, \tau].$

 \begin{theorem}
   \label{Th.1}
   Assume that the function $f\in B{\mathcal S}^p,$ $1\le p<\infty$, has the Fourier series of the form (\ref{Fourier_Series}). Then for any  $\tau >0$, $n\in {\mathbb N}$ and $\varphi\in \Phi$ the following inequality holds:
\begin{equation}\label{(6.7)}
 E_{\lambda_n}(f)_{_{\scriptstyle p}}\le C_{n,\varphi,p}(\tau ) \omega _\varphi\Big(f, \frac {\tau }{\lambda_n}\Big)_{_{\scriptstyle p}},
  \end{equation}
where
 \begin{equation}\label{(6.8)}
 C_{n,\varphi,p}(\tau ):=\left(\inf\limits _{v  \in  M(\tau )} \frac {v  (\tau )-v  (0)}
 {I_{n,\varphi,p}(\tau ,v  )}\right)^{1/p},
  \end{equation}
 and
 \begin{equation}\label{(6.9)}
  I_{n,\varphi,p}(\tau ,v  ):= \inf\limits _{{k \in {\mathbb N}},\,k \ge n}  \int\limits _0^{\tau }\varphi^p\Big(\frac{\lambda_k t}{\lambda_n} \Big){\mathrm d}v  (t).
 \end{equation}
 Futhermore, there exists a function $v _*\in V(\tau )$ that realizes the greatest lower bound in (\ref{(6.8)}). Inequality (\ref{(6.7)}) is
unimprovable on the set of all functions $f\in B{\mathcal S}^p$
 with the Fourier series of the form (\ref{Fourier_Series})  in the sense that for any $\varphi\in \Phi$ and $n\in {\mathbb N}$
 the following equality is true:
 \[
 C_{n,\varphi,p}(\tau )= K_{n,\varphi,p}(\tau ).
 \]
\end{theorem}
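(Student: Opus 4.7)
I would split the proof into three parts: (i) the direct inequality (6.7) with constant $C_{n,\varphi,p}(\tau)$; (ii) existence of the extremal $v_*$; (iii) sharpness $K_{n,\varphi,p}(\tau)=C_{n,\varphi,p}(\tau)$. The basic observation underlying (i) is that in $B{\mathcal S}^p$ the norm is the $\ell_p$-norm of Fourier coefficients, so the best approximant from $G_{\lambda_n}$ is obtained by truncation, yielding $E_{\lambda_n}^p(f)_p=\sum_{|k|\geq n}|A_k(f)|^p$. Fix $v\in V(\tau)$; for each $t\in[0,\tau]$ the value $h=t/\lambda_n$ satisfies $|h|\leq\tau/\lambda_n$, and the definition of the generalized modulus gives
$$\omega_\varphi^p(f,\tau/\lambda_n)_p\;\geq\;\sum_{k\in{\mathbb Z}}\varphi^p(\lambda_k t/\lambda_n)|A_k(f)|^p.$$
Integrating this bound against $dv(t)$, applying Fubini--Tonelli to the nonnegative sum, discarding terms with $|k|<n$, and using the evenness of $\varphi$ together with $\int_0^\tau\varphi^p(\lambda_k t/\lambda_n)dv(t)\geq I_{n,\varphi,p}(\tau,v)$ for $|k|\geq n$, I would obtain
$$(v(\tau)-v(0))\,\omega_\varphi^p(f,\tau/\lambda_n)_p\;\geq\;I_{n,\varphi,p}(\tau,v)\,E_{\lambda_n}^p(f)_p,$$
and taking the infimum over $v\in V(\tau)$ gives (6.7).

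For (ii), the ratio in (6.8) is invariant under affine rescaling of $v$, so one may normalize $v(0)=0$, $v(\tau)=1$; Helly's selection theorem then provides sequential compactness of this normalized set in pointwise convergence. Each map $v\mapsto\int_0^\tau\varphi^p(\lambda_k t/\lambda_n)dv(t)$ is upper semicontinuous along such convergence (with minor care at a possible atom at $\tau$), and the infimum over $k\geq n$ preserves upper semicontinuity, so $\sup_v I_{n,\varphi,p}(\tau,v)$ is attained at some $v_*\in V(\tau)$.

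For (iii), $K\leq C$ is immediate from (6.7). For the reverse inequality, I would recast $1/K^p$ as the minimax
$$\inf_{(p_k)}\sup_{t\in[0,\tau]}\sum_{|k|\geq n}\varphi^p(\lambda_k t/\lambda_n)\,p_k$$
over probability weights $(p_k)$ on $\{|k|\geq n\}$ (obtained by normalizing $\sum|A_k(f)|^p=1$). Sion's minimax theorem applied to the finite truncations $n\leq|k|\leq N$, followed by a passage $N\to\infty$, should dualize this to $\sup_v\inf_{|k|\geq n}\int_0^\tau\varphi^p(\lambda_k t/\lambda_n)dv(t)=1/C^p$, establishing $K=C$. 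Equivalently, using the $v_*$ from (ii) one could try to construct an explicit extremizing sequence $f_j$ whose Fourier mass concentrates on indices $k$ where the infimum in $I_{n,\varphi,p}(\tau,v_*)$ is approached. I expect (iii) to be the main obstacle: the probability weights live in a non-compact infinite-dimensional simplex, so minimax duality does not apply directly and requires a careful truncation-and-limit argument, and the possibility that the infimum defining $I_{n,\varphi,p}(\tau,v_*)$ is not attained at any single $k$ makes the construction of an explicit extremizer delicate and likely forces one to work with an extremizing sequence rather than a genuine extremal function.
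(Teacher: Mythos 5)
Your part (i) is essentially the paper's argument: truncation gives $E_{\lambda_n}^p(f)_{p}=\sum_{|k|\ge n}|A_k(f)|^p$, one bounds $\sum_{k}\varphi^p(\lambda_k t/\lambda_n)|A_k(f)|^p$ below by the tail, integrates against ${\mathrm d}v(t)$, and invokes the definition of $I_{n,\varphi,p}(\tau,v)$. (The paper actually records the slightly stronger intermediate bound $E_{\lambda_n}^p(f)_p\le I_{n,\varphi,p}(\tau,v)^{-1}\int_0^\tau\omega_\varphi^p(f,t/\lambda_n)\,{\mathrm d}v(t)$, which it later reuses for the corollaries with the weights $1-\cos t$ and $t$; your version discards this, but it is not needed for Theorem 1 itself.)

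Where you diverge is in (ii) and (iii), and this is also where your proposal is not yet a proof. The paper does not touch the simplex of probability weights at all: having reduced the sharpness question to $K_{n,\varphi,p}^{-p}(\tau)\le J_{n,\varphi,p}(\tau)=\inf_{w\in W_{n,\varphi,p}}\|w\|_{C_{[0,\tau]}}$, where $W_{n,\varphi,p}$ is the convex hull of the functions $\varphi^p(\lambda_j\cdot/\lambda_n)$, $j\ge n$, it applies the Korneichuk duality formula for best approximation by a convex set in $C_{[a,b]}$ (Proposition 1). That single proposition converts $J_{n,\varphi,p}(\tau)$ into $\sup_{\mathop{V}_0^\tau(g)\le 1}\inf_{w}\int_0^\tau w\,{\mathrm d}g$, reduces the supremum to nondecreasing $v$ because every $w$ is nonnegative, identifies the inner infimum with $I_{n,\varphi,p}(\tau,v)$, and simultaneously guarantees the existence of the extremal $g_*$ (hence $v_*$) with variation $1$. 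In other words, the minimax is taken over the dual ball of $C_{[0,\tau]}$, which is compact, so all of the compactness difficulties you correctly identify on the weight side simply do not arise; and no extremal function $f$ (or extremizing sequence) is ever constructed. Your alternative via Sion's theorem on finite truncations plus a limit $N\to\infty$, and Helly selection for $v_*$, can in principle be pushed through, but as written it stops exactly at the step you flag as the main obstacle: you have not shown that the truncated dual values $\sup_v\inf_{n\le|k|\le N}\int_0^\tau\varphi^p(\lambda_k t/\lambda_n)\,{\mathrm d}v(t)$ converge down to $\sup_v\inf_{|k|\ge n}$, which needs a Helly extraction of near-optimal $v_N$ and a diagonal argument. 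I would recommend replacing that machinery by the ready-made duality statement from Korneichuk, which is the paper's key lemma and closes (ii) and (iii) in one stroke.
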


In the spaces  $L_2$ of $2\pi$-periodic square-summable
functions, for moduli of continuity, this result was obtained by Babenko \cite{Babenko_1986}.
In the spaces ${\mathcal S}^p$  of functions of one and several variables, this result for classical moduli of smoothness was obtained, respectively, \cite{Stepanets_Serdyuk_2002} and   \cite{Abdullayev_Ozkartepe_Savchuk_Shidlich_2019}, and for generalized moduli of smoothness, in \cite{Abdullayev_Chaichenko_Shidlich_2021} (for functions of one variable).
In the proof of Theorem \ref{Th.1}, we mainly use the ideas outlined in
\cite{Stepanets_Serdyuk_2002, Babenko_1986,
Chernykh_1967, Chernykh_1967_MZ},   taking into account the peculiarities of the spaces $B{\mathcal S}^p$.

\begin{proof}

From relations (\ref{norm_Sp}) and (\ref{Best_Approx}), it follows that for any   $f\in B{\mathcal S}^p$
with  the Fourier series of the form (\ref{Fourier_Series}), we have
  \begin{equation} \label{Best_Approx}
     E_{\lambda_n}^p(f)_{_{\scriptstyle p}}=\|f-S_n(f)\|_{_{\scriptstyle p}} =\sum\limits_{|k|\ge n} |A_k(f)|^p,
 \end{equation}
where $S_n(f):=\sum _{|k|<n} A_{k}(f) {\mathrm e}^{ {\mathrm i}\lambda_k x}$.

For any  $f\in B{\mathcal S}^p$, $\varphi\in \Phi$ and $h\in {\mathbb R}$, consider the sequence  of numbers
$ \{\varphi(\lambda_kh) A_k(f) \}_{k\in {\mathbb Z}}$. If there exists a function
$\Delta_h^\varphi f\in B$-a.p. such that for all $k\in {\mathbb Z}$
  \begin{equation}
     \label{Fourier_Coeff_Delta}
     A_k(\Delta_h^\varphi f)=\varphi(\lambda_kh) A_k(f),
 \end{equation}
then denote by  $\|\Delta_h^\varphi f\|_{_{\scriptstyle p}}$
the usual norm  (\ref{norm_Sp}) of the function  $\Delta_h^\varphi f$. If such a $B$-a.p function $\Delta_h^\varphi f$
does not exist, then to simplify notation we will also use the notation $\|\Delta_h^\varphi f\|_{_{\scriptstyle p}}$,
meaning by it the $l_p$-norm of the sequence $ \{\varphi(\lambda_kh) A_k(f) \}_{k\in {\mathbb Z}}$.

 Taking into account (\ref{Best_Approx}),  (\ref{(6.9)})) and the parity of the function $\varphi$, we obtain
 \[
  \|\Delta_h^\varphi f\|_{_{\scriptstyle p}}^p=
  \sum_{{k}\in {\mathbb Z}} \varphi^p(\lambda_kh)|A_k(f)|^p
\ge  \sum_{|k|\ge n} \varphi^p(\lambda_kh) |A_k(f)|^p
\]
 \[
 =\frac {I_{n,\varphi,p}(\tau ,v  )}{v  (\tau )-v  (0)}E_{\lambda_n}^p(f)_{_{\scriptstyle p}}+
 \sum_{|k|\ge n}   |A_k(f)|^p\Big(\varphi^p(\lambda_kh)-\frac {I_{n,\varphi,p}(\tau ,v  )} {v  (\tau ) - v  (0)}\Big),
 \]
where the quanity $I_{n,\varphi,p}(\tau ,v  )$ is defined by  (\ref{(6.9)}). Hence, for any $ t\in [0,\tau]$ we find
  \begin{equation}\label{(6.32)}
 E_{\lambda_n}^p(f)_{_{\scriptstyle p}}\le
 \frac {v  (\tau )-v  (0)} {I_{n,\varphi,p}(\tau ,v  )}
 \Bigg( \|\Delta_{t/{\lambda_n}}^\varphi f\|_{_{\scriptstyle p}}^p             -
    \sum_{|k|\ge n}   |A_k(f)|^p\bigg(\varphi^p\Big(\frac{\lambda_k t}{\lambda_n}\Big)-\frac {I_{n,\varphi,p}(\tau ,v  )}{v
 (\tau )-v  (0)}\bigg) \Bigg).
 \end{equation}
 Since the both sides of inequality (\ref{(6.32)}) are nonnegative and the series on its right-hand side is majorized on
the entire real axis by the absolutely convergent series
 $
 C(\varphi)\sum_{|k|\ge n}   |A_k(f)|^p,
 $
where $C(\varphi) =\max\limits_{t\in {\mathbb R}}\varphi(t)$, then integrating this inequality
with respect to ${\mathrm d}v  (t)$  from  $0$ to $\tau,$ we get
 \[
 E_{\lambda_n}^p(f)_{_{\scriptstyle p}}(v  (\tau )-v
 (0))\le \frac {v  (\tau )-v  (0)} {I_{n,\varphi,p}(\tau ,v  )}
 \Bigg(\int\limits _0^{\tau } \|\Delta_{t/{\lambda_n}}^\varphi f\|_{_{\scriptstyle p}}^p  {\mathrm d}v  (t)
 \]
 \[
 - \sum_{|k|\ge n}   |A_k(f)|^p\bigg(\int\limits  _0^{\tau }\varphi^p\Big(\frac{\lambda_k t}{\lambda_n}\Big){\mathrm d}v  (t)-I_{n,\varphi,p}(\tau ,v  )\bigg)\Bigg).
 \]
By  virtue of (\ref{(6.9)}), we have маємо
 \[
 \int\limits  _0^{\tau }\varphi^p\Big(\frac{\lambda_k t}{\lambda_n}\Big){\mathrm d}v  (t)-
 I_{n,\varphi,p}(\tau ,v  )\ge 0.
 \]
Therefore, for any function $v  \in  V(\tau )$, we have
  \begin{equation}\label{(6.34)}
 E_{\lambda_n}^p(f)_{_{\scriptstyle p}}\le \frac 1{I_{n,\varphi,p}(\tau ,v  )}\int\limits _0^{\tau}
 \|\Delta_{t/{\lambda_n}}^\varphi f\|_{_{\scriptstyle p}}^p  {\mathrm d}v  (t)
 \le \frac 1{I_{n,\varphi,p}(\tau ,v  )}\int\limits _0^{\tau }\omega _\varphi^p\Big(f, \frac t{\lambda_n}\Big){\mathrm d}v  (t).
 \end{equation}
Hence we immediately get (\ref{(6.7)}) and the estimate
\begin{equation}\label{(6.35)}
  K_{n,\varphi,p}^p(\tau )\le \inf\limits _{v  \in V(\tau )}\frac {v  (\tau
  )-v  (0)}{I_{n,\varphi,p}(\tau ,v  )}=C_{n,\varphi,p}^p(\tau ).
 \end{equation}
 It remains to show that  in relation (\ref{(6.35)})  there is in fact equality

By virtue of (\ref{general_modulus}) and (\ref{Best_Approx}), we have
 \begin{equation}\label{K_n_new}
   K_{n,\varphi,p}^p(\tau )
   = \sup\limits_{f\in B{\mathcal S}^p }
   \frac { \sum _{|k| \ge n}  |A_k(f)|^p}
    {\sup _{|h|\le \tau}\sum _{|k| \ge n}  \varphi^p(\lambda_k h/\lambda_n) |A_k(f)|^p}.
 \end{equation}
In (\ref{K_n_new}), it is sufficient to consider the  supremum over all functions $f\in B{\mathcal S}^p$, such that
 $\sum _{|k| \ge n}  |A_k(f)|^p\le 1$. Then, taking into account the parity of the function $\varphi$, we obtain
 \begin{equation}\label{K_n_new1}
   K_{n,\varphi,p}^{-p}(\tau)  \le
   J_{n,\varphi,p}(\tau):=\inf\limits_{w\in W_{n,\varphi,p}}\|w\|_{_{\scriptstyle  C_{[0,\tau]}}},
 \end{equation}
where the set
\begin{equation}\label{W_{n,varphi}}
 W_{n,\varphi,p}:=\bigg\{\omega(u)=\sum_{j=n}^\infty \varrho_j \varphi^p\Big(\frac{\lambda_ju}{\lambda_n}\Big): \varrho_j\ge 0, \ \sum_{j=n}^\infty \varrho_j=1 \bigg\}.
 \end{equation}
Further, we use the duality relation in the space $C_{[a,b]},$ which we formulate as the following statement  (see, e.g., \cite[Ch.~1.4]{Korneichuk_1987}):

\begin{proposition}\label{Prop1}{\rm \cite[Ch.~1.4]{Korneichuk_1987}}.
        If $F$ is a convex set in the space $C_{[a,b]},$ then for any $x \in C_{[a,b]}$
        \begin{equation}\label{(6.38)}
        \inf\limits_{u\in F}\|x-u\|_{_{C_{[a,b]}}}=\sup\limits _{{{\mathop {V}\limits_a^b}}(g)\le 1}\Big( \int\limits _a^bx(t){\mathrm d}g(t)-\sup\limits  _{u\in F}\int\limits _a^bu(t){\mathrm d}g(t)\Big).
        \end{equation}
 For $x\in C_{[a,b]}\setminus \bar F$, where $\bar F$ is the closure of a set $F$, there exists a function $g$ with variation equal to 1 on  $[a,b]$ that realizes the least upper bound in (\ref{(6.38)}).
\end{proposition}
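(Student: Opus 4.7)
The plan is to derive this identity as an instance of the standard duality formula for the distance from a point to a closed convex set in a Banach space, combined with the Riesz representation theorem for $C_{[a,b]}^*$.

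First I would settle the easy direction (LHS $\ge$ RHS in (\ref{(6.38)})). For any $g$ of bounded variation with $V_a^b(g)\le 1$ and any $u\in F$, the elementary bound
$$ \int\limits_a^b (x(t)-u(t))\,\mathrm{d} g(t)\le V_a^b(g)\,\|x-u\|_{C_{[a,b]}}\le \|x-u\|_{C_{[a,b]}} $$
gives $\int_a^b x\,\mathrm{d} g-\sup_{u\in F}\int_a^b u\,\mathrm{d} g\le \|x-u\|_{C_{[a,b]}}$ for every $u\in F$; taking the infimum over $u$ and then the supremum over admissible $g$ produces the required inequality.

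For the reverse inequality I would invoke geometric Hahn--Banach. Put $d:=\inf_{u\in F}\|x-u\|_{C_{[a,b]}}$. If $x\in \bar F$ then $d=0$ and the easy direction already forces equality, with $g\equiv\mathrm{const}$ realizing the supremum. Otherwise $d>0$, the open ball $B(x,d)\subset C_{[a,b]}$ and the closed convex set $\bar F$ are disjoint, and separation yields a nonzero $\ell\in C_{[a,b]}^*$ together with $\alpha\in{\mathbb R}$ such that $\ell(u)\le\alpha<\ell(y)$ for all $u\in\bar F$ and $y\in B(x,d)$. Writing $y=x+z$ with $\|z\|<d$ and letting $\ell(z)\to -d\|\ell\|$ gives $\ell(x)\ge \alpha+d\|\ell\|$, whence $\ell(x)-\sup_{u\in F}\ell(u)\ge d\|\ell\|$. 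After rescaling to $\|\ell\|=1$, this provides a functional that attains the value $d$ on the supremum side of (\ref{(6.38)}).

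Finally I would translate $\ell$ back into the form demanded by the statement via the Riesz representation theorem: every $\ell\in C_{[a,b]}^*$ is of the form $\ell(h)=\int_a^b h(t)\,\mathrm{d} g(t)$ for some function $g$ of bounded variation with $\|\ell\|=V_a^b(g)$. Thus the normalized $\ell$ constructed above corresponds to a function $g$ of total variation exactly $1$ realizing the least upper bound in (\ref{(6.38)}), which gives both the equality and the ``moreover'' assertion for $x\notin\bar F$ simultaneously.

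The delicate step will be the open/closed separation: one needs $x\notin\bar F$ (not merely $x\notin F$) to obtain a strict gap between the disjoint convex sets, and the sign convention in Hahn--Banach must be chosen so that pushing $\ell(z)$ to its infimum over the open ball extracts the lower bound $d\|\ell\|$ rather than $0$. Everything else is routine bookkeeping.
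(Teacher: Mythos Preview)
The paper does not give its own proof of this proposition at all: it is quoted verbatim from Korneichuk's monograph \cite[Ch.~1.4]{Korneichuk_1987} and used as a black box in the proof of Theorem~\ref{Th.1}. Your argument---the easy direction via the obvious estimate $\int_a^b(x-u)\,{\mathrm d}g\le V_a^b(g)\|x-u\|_{C_{[a,b]}}$, then Hahn--Banach separation of $B(x,d)$ from $F$ when $d>0$, followed by Riesz representation to convert the separating functional into a function of bounded variation with total variation~$1$---is exactly the standard proof one finds in Korneichuk (and in any functional-analytic treatment of the duality formula $\mathrm{dist}(x,F)=\sup_{\|\ell\|\le1}(\ell(x)-\sup_F\ell)$). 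It is correct as written; the only cosmetic point is that you could separate $B(x,d)$ directly from $F$ rather than from $\bar F$, since Hahn--Banach separation only requires one of the two disjoint convex sets to be open.
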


It is easy to see that the set $W_{n,\alpha,p}$ is a convex subset of the space $C_{[0, \tau]}$. Therefore, setting $a=0,$ $b=\tau,$ $x(t)\equiv 0,$ $u(t)=w(t) \in W_{n,\alpha,p},$ $F=W_{n,\alpha,p},$ from relation (\ref{(6.38)}) we get
\begin{equation}\label{(6.39)}
   J_{n,\varphi,p}(\tau )=\inf\limits _{w\in W_{n,\varphi,p}}\|0-w\|_{_{C_{[0, \tau ]}}}
   $$
   $$
   =\sup\limits _{\mathop {V}\limits _0^{\tau}(g)\le 1}\bigg(0
   -\sup\limits _{w\in W_{n,\varphi,p}}\int\limits _0^{\tau}w(t){\mathrm d}g(t)\bigg)= \sup\limits _{\mathop {V}\limits _0^{\tau }(g)
   \le 1}\inf\limits _{w\in W_{n,\varphi,p}}\int\limits _0^{\tau } w(t){\mathrm d}g(t).
 \end{equation}
Furthermore, according to Proposition \ref{Prop1}, there exists a function $g_*(t),$; that realizes the least upper bound in
(\ref{(6.39)}) and such that $\mathop {V}\limits _0^{\tau }(g_*)=1$.  Every function $w\in
W_{n,\alpha,p}$ is nonnegative. Therefore, it sufficient to take the supremum
on the right-hand side of (\ref{(6.39)}) over the set of nondecreasing functions $v  (t)$ for which $v  (\tau )-v  (0)\le 1$. For such functions, by virtue of (\ref{(6.9)}) and (\ref{W_{n,varphi}}),
the following equality is true:
 \[
 \inf\limits _{w\in W_{n,\alpha,p}}\int\limits _0^{\tau }w(t){\mathrm d}v  (t)=I_{n,\varphi,p}(\tau ,v   ).
 \]
 This implies that there exists a function   $v _*\in V(\tau )$ such that  $v  _*(\tau )-v  _*(0)=1$ and
  \begin{equation}\label{(6.41)}
  I_{n,\varphi,p}(\tau ,v  _*)= \sup\limits _{v  \in V(\tau ),\, \mathop {V}\limits _0^{\tau }(v  )\le 1}  I_{n,\varphi,p}(\tau ,v   )=
  J_{n,\varphi,p}(\tau ).
  \end{equation}
From relations (\ref{K_n_new1}) and (\ref{(6.41)}), we obtain the necessary estimate:
    \[
     K_{n,\varphi,p}^p(\tau )\ge \frac 1{
  J_{n,\varphi,p}(\tau )
  }
  =\frac 1{ I_{n,\varphi,p}(\tau ,v  _*)}=\frac {v _*(\tau )-v _*(0)} {I_{n,\varphi,p}(\tau ,v _*)}=C_{n,\varphi,p}^p(\tau ).
  \]

 \end{proof}

Consider an important special case when
$\varphi(t)=\varphi_\alpha(t)=  2^{\frac {\alpha}2} (1-\cos t)^{\frac \alpha2}=2^\alpha   |\sin (t/2) |^\alpha $, $\alpha>0$.
In this case, we set $ \omega_{\varphi_\alpha}(f,\delta)_{_{\scriptstyle p}}=: \omega_\alpha(f,\delta)_{_{\scriptstyle p}}$
and  $K_{n,\varphi_\alpha,p}(\tau )=:K_{n,\alpha,p}(\tau )$.
For the weight function $v  _1(t) = 1 - \cos t$, we get the following assertion:

 \begin{corollary}
   \label{Th.2} For any function $f\in B{\mathcal S}^p,$ $1\le p<\infty$,
   with the Fourier series of the form  (\ref{Fourier_Series}), the following inequalities holds:
 \begin{equation}\label{(6.11)}
 E_{\lambda_n}^p (f)_{_{\scriptstyle  {p}}} \le \frac 1{2^{\frac {\alpha  p }2}I_n(\frac {\alpha p }2)}\int\limits_0^{\pi }
 \omega _\alpha^p\Big(f, \frac t{\lambda_n}\Big)_{_{\scriptstyle  {p}}}  \sin t\,{\mathrm d}t, \ \ n \in {\mathbb N},\ \alpha>0,
  \end{equation}
 where
 \begin{equation}\label{(6.12)}
 I_n(s):=\inf\limits _{{k \in {\mathbb N}},\,k \ge n} \int\limits _0^{\pi } \Big(1-\cos \frac {\lambda_k t}{\lambda_n}\Big)^{s}\sin t\, {\mathrm d}t, \ \
 s >0, \ \ n\in {\mathbb N}.
   \end{equation}
If, in addition $\frac {\alpha p }2\in {\mathbb N}$, then
 \begin{equation}\label{(6.13)}
 I_n\Big(\frac { \alpha p }2\Big)=\frac {2^{\frac { \alpha  p }2+1}}{\frac { \alpha  p }2+1},
   \end{equation}
and inequality (\ref{(6.11)}) cannot be improved for any  $n\in
{\mathbb N}.$
 \end{corollary}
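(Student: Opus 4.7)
The plan is to deduce Corollary \ref{Th.2} from Theorem \ref{Th.1} by a judicious choice of the weight function $v\in V(\pi)$, then to evaluate $I_n(\alpha p/2)$ explicitly when $\alpha p/2\in{\mathbb N}$, and finally to verify sharpness by exhibiting a single-exponent extremal function.

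First I would specialize Theorem \ref{Th.1} to $\varphi=\varphi_\alpha$, $\tau=\pi$, and the particular weight $v_1(t)=1-\cos t$, so that ${\mathrm d}v_1(t)=\sin t\,{\mathrm d}t$ and $v_1(\pi)-v_1(0)=2$. Since $\varphi_\alpha^p(u)=2^{\alpha p/2}(1-\cos u)^{\alpha p/2}$, one has $I_{n,\varphi_\alpha,p}(\pi,v_1)=2^{\alpha p/2}I_n(\alpha p/2)$, and substituting these data into (\ref{(6.34)}) yields (\ref{(6.11)}) at once, so no new approximation-theoretic input is needed at this stage.

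Next, assuming $s:=\alpha p/2\in{\mathbb N}$, I would evaluate $I_n(s)$ exactly. Taking $k=n$ (so that $\lambda_k/\lambda_n=1$) and making the substitution $u=1-\cos t$ gives
\[
\int_0^\pi(1-\cos t)^s\sin t\,{\mathrm d}t=\int_0^2 u^s\,{\mathrm d}u=\frac{2^{s+1}}{s+1},
\]
which immediately shows $I_n(s)\le 2^{s+1}/(s+1)$. For the reverse inequality I would expand $(1-\cos y)^s$ as a finite cosine polynomial $\sum_{j=0}^{s}c_j\cos(jy)$ (via the binomial theorem followed by power-reduction formulas) and use the elementary identity $\int_0^\pi\cos(at)\sin t\,{\mathrm d}t=-(1+\cos a\pi)/(a^2-1)$ (with the limiting value $0$ at $a=1$) to verify that for every real $\mu=\lambda_k/\lambda_n\ge 1$ one has $\int_0^\pi(1-\cos(\mu t))^s\sin t\,{\mathrm d}t\ge 2^{s+1}/(s+1)$, with equality precisely at $\mu=1$.

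For sharpness, I would test the single-exponent function $f_0(x)={\mathrm e}^{{\mathrm i}\lambda_n x}\in B{\mathcal S}^p$, whose only nonzero Fourier coefficient is $A_n=1$. Then $E_{\lambda_n}^p(f_0)_p=1$ by (\ref{Best_Approx}), and since $\varphi_\alpha(u)=2^\alpha|\sin(u/2)|^\alpha$ is even and monotone in $|u|$ on $[-\pi,\pi]$, the sup in (\ref{general_modulus}) is attained at the endpoint and yields $\omega_\alpha(f_0,t/\lambda_n)_p=2^\alpha\sin^\alpha(t/2)$ for $t\in[0,\pi]$. The substitution $u=\sin(t/2)$ evaluates $\int_0^\pi 2^{\alpha p}\sin^{\alpha p}(t/2)\sin t\,{\mathrm d}t$ to $2^{\alpha p+2}/(\alpha p+2)$, and multiplying by the constant $(\alpha p/2+1)/2^{\alpha p+1}$ appearing in (\ref{(6.11)}) reduces its right-hand side to exactly $1$, matching the left-hand side.

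The main obstacle is the lower bound in the second step: verifying that $\int_0^\pi(1-\cos(\mu t))^s\sin t\,{\mathrm d}t$ attains its minimum over $\mu\ge 1$ precisely at $\mu=1$. Because the ratios $\lambda_k/\lambda_n$ need not be integers, a one-line substitution argument is unavailable, and one must carry out a careful sign analysis of the residual cosine sum uniformly in $\mu\ge 1$.
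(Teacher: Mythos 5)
Your reduction of (\ref{(6.11)}) to relation (\ref{(6.34)}) with $\tau=\pi$, $\varphi=\varphi_\alpha$, $v_1(t)=1-\cos t$ (so that $I_{n,\varphi_\alpha,p}(\pi,v_1)=2^{\alpha p/2}I_n(\alpha p/2)$) is exactly what the paper does, and your sharpness verification is also correct: the paper tests the function $f^*$ of the form (\ref{A6.53}) instead of a single exponential, but the computation --- monotonicity of $\|\Delta_{t/\lambda_n}^{\varphi_\alpha}f\|_p$ in $t$, hence $\omega_\alpha(f,t/\lambda_n)_p=2^{\alpha}\sin^{\alpha}(t/2)\,\|\cdot\|$, followed by the elementary integral $\int_0^\pi(1-\cos t)^{s}\sin t\,{\mathrm d}t=2^{s+1}/(s+1)$ --- is the same, and your constants check out.

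The genuine gap is the step you yourself flag as the main obstacle: the lower bound
\[
\int_0^\pi\bigl(1-\cos\mu t\bigr)^{s}\sin t\,{\mathrm d}t\ \ge\ \frac{2^{s+1}}{s+1},\qquad \mu\ge 1,\ s\in{\mathbb N},
\]
which is what makes the infimum over $k\ge n$ in (\ref{(6.12)}) attained at $k=n$ and hence gives (\ref{(6.13)}). You correctly identify the right tool (expand $(1-\cos\mu t)^{s}$ as a cosine polynomial and use $\int_0^\pi\cos(at)\sin t\,{\mathrm d}t=-(1+\cos a\pi)/(a^2-1)$), but you do not carry out the sign analysis, and this is precisely where all the work lies --- the paper does not prove it either, but discharges it by citing relation (52) of \cite{Stepanets_Serdyuk_2002}, which is the legitimate way to close the argument. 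Note also that your parenthetical claim that equality holds ``precisely at $\mu=1$'' is false: already for $s=1$ the integral equals $2+(1+\cos\mu\pi)/(\mu^2-1)$, which equals $2^{2}/2$ at every odd integer $\mu$. This does not affect the corollary (only the inequality and equality at $\mu=1$ are needed), but it shows that the uniform-in-$\mu$ analysis is more delicate than a strict-minimum heuristic suggests, and it should either be done in full or replaced by the citation.
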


\begin{proof} Inequality (\ref{(6.11)}) follows from relation  (\ref{(6.34)}) with $\tau =\pi $, $\varphi(t)=\varphi_\alpha(t)$   and $v  (t)=1-\cos t,$ $t\in [0, \pi].$
In \cite[relation (52)]{Stepanets_Serdyuk_2002}, it was shown that for any  $\theta\ge 1$ and $s\in {\mathbb N}$
the following inequality holds:
 \[
 \int_0^\pi (1-\cos\theta t)^s \sin t {\mathrm d}t\ge  \frac {2^{s+1}}{s+1}.
 \]
which turns into equality for $\theta=1$. Therefore, setting  $s=\frac {\alpha p }2$ and $\theta=\frac {\lambda_\nu}{\lambda_n}$, $\nu=n,n+1,\ldots$,
and  the monotonicity  of the sequence of Fourier exponents $\{\lambda_k\}_{k\in {\mathbb Z}}$,
we see that for $\frac { \alpha p }2\in {\mathbb N}$,
indeed, the equality (\ref{(6.13)}) holds.

To prove that inequality (\ref{(6.11)}) is unimprovable for $\frac {\alpha  p
}2\in {\mathbb N}$, it suffices to verify that  the function
  \begin{equation}\label{A6.53}
 f^*(x)=\gamma +\beta e^{-\lambda_n x} + \delta e^{\lambda_n x},
     \end{equation}
where $\gamma$, $\beta$ and $\delta $ are arbitrary complex numbers, satisfies the equality
  \begin{equation}\label{(6.53)}
  E_{\lambda_n}^p (f^*)_{_{\scriptstyle  {p}}}  =\frac {\frac { \alpha  p }2+1}{2^{ \alpha  p+1}}\int\limits _0^{\pi }
  \omega _ \alpha ^p\Big(f^*, \frac t{\lambda_n}\Big)_{_{\scriptstyle  {p}}}   \sin t\, {\mathrm d}t, \ \  \ n\in {\mathbb N},\ \  \alpha >0.
     \end{equation}
In this case,  $E_{\lambda_n}^p (f^*)^p_{_{\scriptstyle  {p}}} =|\beta |^p+|\delta |^p$, the function
  $
    \|\Delta_{{t}/{\lambda_n}}^ {\varphi_\alpha}  f^*\|_{_{\scriptstyle  {p}}} ^p
    =     2^{\frac { \alpha  p }2}(|\beta |^p+|\delta |^p) (1-\cos t)^{\frac { \alpha  p }2}
 $
does not decrease with respect to $t$ on $[0,\pi ].$ Therefore,
 $
 \omega _ \alpha  (f^*, \frac t{\lambda_n} )_{_{\scriptstyle  {p}}} =
 \|\Delta_{t/\lambda_n}^ {\varphi_\alpha}  f^*\|_{_{\scriptstyle  {p}}},
 $
and
 \[
 \frac {2^{ \alpha  p+1}} {\frac { \alpha  p }2+1}E_{\lambda_n}^p (f^*)_{_{\scriptstyle  {p}}} -\int\limits _0^{\pi }
  \omega _ \alpha ^p\Big(f^*, \frac t{\lambda_n}\Big)_{_{\scriptstyle  {p}}}   \sin t\, {\mathrm d}t
  \]
  \[
  =(|\beta |^p+|\delta |^p)\Big(\frac {2^{ \alpha  p+1}} {\frac { \alpha  p }2+1}-2^{\frac { \alpha  p }2}
  \int_0^\pi (1-\cos  t)^{\frac { \alpha  p }2} \sin t {\mathrm d}t\Big)=0
  \]
\end{proof}

If the weight function $v  _2(t)=t$, then we obtain the following assertion:
 \begin{corollary}
   \label{Th.2a} Assume that the function $f\in B{\mathcal S}^p,$ $1\le p<\infty$,
   has the Fourier series of the form (\ref{Fourier_Series}) and the number
   $\alpha>0$ such that  $\alpha p\ge 1$. Then for any  $0<\tau \le \frac {3\pi }4$ and $n \in {\mathbb N}$,
  \begin{equation}\label{(A6.53)}
    E_{\lambda_n}^p (f)_{_{\scriptstyle  {p}}} \le \frac {1}{2^{\alpha p}\int _0^{\tau }\sin ^{\alpha p }\frac t{2}{\mathrm d}t} \int\limits_0^{\tau}
 \omega _\alpha^p\Big(f, \frac t{\lambda_n}\Big)_{_{\scriptstyle  {p}}} {\mathrm d}t .
      \end{equation}
Equality in  (\ref{(A6.53)}) holds for the function $f^*$  of the form (\ref{A6.53}).
 \end{corollary}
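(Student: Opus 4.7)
The plan is to derive (\ref{(A6.53)}) by specializing the intermediate inequality (\ref{(6.34)}) from the proof of Theorem \ref{Th.1} to the weight $v(t)=t$ and the test function $\varphi=\varphi_\alpha$. Under these choices, $v(\tau)-v(0)=\tau$ and (\ref{(6.34)}) reads
\[
E_{\lambda_n}^p(f)_{_{\scriptstyle p}} \le \frac{1}{I_{n,\varphi_\alpha,p}(\tau,v)}\int_0^\tau \omega_\alpha^p\Big(f,\frac{t}{\lambda_n}\Big)_{_{\scriptstyle p}}\,{\mathrm d}t.
\]
Since $\varphi_\alpha^p(u)=2^{\alpha p}|\sin(u/2)|^{\alpha p}$, definition (\ref{(6.9)}) gives $I_{n,\varphi_\alpha,p}(\tau,v)=2^{\alpha p}\inf_{k\ge n}\int_0^\tau |\sin(\lambda_k t/(2\lambda_n))|^{\alpha p}\,{\mathrm d}t$, and with $\theta_k:=\lambda_k/\lambda_n\ge 1$ for $k\ge n$ the bound (\ref{(A6.53)}) will follow once one establishes the monotonicity lemma
\[
\int_0^\tau |\sin(\theta t/2)|^s\,{\mathrm d}t\ge \int_0^\tau \sin^s(t/2)\,{\mathrm d}t,\qquad \theta\ge 1,\ s=\alpha p\ge 1,\ 0<\tau\le 3\pi/4,
\]
after which the infimum is attained at $k=n$ and the denominator identifies with $2^{\alpha p}\int_0^\tau \sin^{\alpha p}(t/2)\,{\mathrm d}t$.

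Proving this lemma is the main obstacle. I would substitute $u=\theta t/2$ to recast it as $F(\theta a)\ge F(a)$, where $a=\tau/2\le 3\pi/8$ and $F(x)=\frac{1}{x}\int_0^x |\sin u|^s\,{\mathrm d}u$ is the running mean of $|\sin|^s$. For $\theta a\le \pi/2$ the integrand is nondecreasing on $[0,\theta a]$ and so exceeds its running mean; a direct computation of $F'$ then shows that $F$ is nondecreasing on $[a,\pi/2]$, whence $F(\theta a)\ge F(a)$. For $\theta a>\pi/2$ one exploits the $\pi$-periodicity of $|\sin|^s$ to represent $F(x)$ as a damped oscillation around its limit $I_s/\pi$, where $I_s=\int_0^\pi \sin^s u\,{\mathrm d}u$, and then checks that $F$ does not dip below $F(a)$ on the remainder of $[a,\infty)$. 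The restriction $\tau\le 3\pi/4$ is precisely what forces $F(a)$ to be small enough to sit below every subsequent local minimum of $F$; the verification of this last point requires a case analysis in $s=\alpha p$, and is the technically delicate ingredient of the argument.

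For the equality assertion, the function $f^*(x)=\gamma+\beta e^{-{\mathrm i}\lambda_n x}+\delta e^{{\mathrm i}\lambda_n x}$ has only three nonzero Fourier coefficients, at $k=0,\pm n$, so by (\ref{Best_Approx}) one has $E_{\lambda_n}^p(f^*)_{_{\scriptstyle p}}=|\beta|^p+|\delta|^p$. Since $|\sin(\lambda_n h/2)|$ is nondecreasing in $|h|$ on the interval $|h|\le t/\lambda_n\le \pi/\lambda_n$, the supremum in definition (\ref{general_modulus}) is attained at the endpoint, giving $\omega_\alpha^p(f^*,t/\lambda_n)_{_{\scriptstyle p}}=2^{\alpha p}(|\beta|^p+|\delta|^p)\sin^{\alpha p}(t/2)$. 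Substituting both identities into (\ref{(A6.53)}) turns each side into $|\beta|^p+|\delta|^p$, establishing the claimed equality and completing the argument.
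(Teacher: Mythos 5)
Your proposal follows essentially the same route as the paper: specialize (\ref{(6.34)}) to $v(t)=t$ and $\varphi=\varphi_\alpha$, rewrite the infimum defining the denominator in terms of the running mean $F(x)=\frac1x\int_0^x|\sin u|^{\alpha p}\,{\mathrm d}u$, reduce to showing $F(\theta a)\ge F(a)$ for $\theta\ge1$ and $a=\tau/2\le 3\pi/8$, and verify equality for $f^*$ by the same direct computation of $E_{\lambda_n}^p(f^*)_{_{\scriptstyle p}}$ and $\omega_\alpha^p(f^*,\cdot)_{_{\scriptstyle p}}$. The one ingredient you sketch but do not complete (the case $\theta a>\pi/2$ of the monotonicity lemma) is exactly the statement $\inf_{x\ge h/2}F_\beta(x)=F_\beta(h/2)$, $h\in(0,\tfrac{3\pi}4)$, $\beta\ge1$, which the paper itself does not prove but cites from \cite{Voicexivskij_2002}, so this is not a gap relative to the paper's argument.
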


\begin{proof} From inequality (\ref{(6.34)}), it follows that
 \[
     E_{\lambda_n}^p(f)_{_{\scriptstyle p}}
      \le \frac 1{\tilde{I}_n(\frac {\alpha p }2)}\int\limits _0^{\tau }
      \omega _\alpha^p\Big(f, \frac t{\lambda_n}\Big){\mathrm d}t,
 \]
where
 \[
 \tilde{I}_n(s):=\inf\limits _{{k \in {\mathbb N}},\,k \ge n} \int\limits _0^{\tau } \Big(1-\cos \frac {\lambda_k t}{\lambda_n}\Big)^{s} {\mathrm d}t, \ \
 s >0, \ \ n\in {\mathbb N}.
 \]
Consider the function
 \[
 F_\beta(x):=\frac 1x \int_0^x |\sin t|^\beta {\mathrm d}t.
 \]
In \cite{Voicexivskij_2002}, it is shown that for any  $h\in (0,\frac{3\pi}4)$ and $\beta\ge 1$, the following relation is true:
   \begin{equation}\label{W1}
   \inf\limits_{x\ge h/2} F_\beta(x)=F_\beta(h/2).
      \end{equation}
Since for any  $\theta\in {\mathbb R}$,
 \[
 \int\limits _0^{\tau } (1-\cos \theta t)^{\beta} {\mathrm d}t=
 \int\limits _0^{\tau }\Big|\sin \frac {\theta t}{2}\Big|^{\beta}   {\mathrm d}t=
  \tau F_{\alpha p}\Big( {\frac {\theta \tau}{2}}\Big),
 \]
then setting  $\theta=\frac{\lambda_k}{\lambda_n}\ge 1$ ($k\ge n$) and $\beta=\alpha p$, from (\ref{W1})
(with $\tau \in (0, \frac {3\pi }4]$) we obtain  
 \[
      \tilde{I}_n\Big(\frac {\alpha p }2\Big)=
          \inf\limits _{{k \in {\mathbb N}}:k  \ge n}
                    \int\limits _0^{\tau }\Big(1 - \cos\frac {\lambda_k t}{\lambda_n}\Big)^{\frac {\alpha p }2} {\mathrm d}t
                    = \inf\limits _{{k \in {\mathbb N}}:k  \ge n}
                    \int\limits _0^{\tau }\Big|\sin \frac {\lambda_k t}{2\lambda_n}\Big|^{\alpha p}   {\mathrm d}t = \int\limits _0^{\tau }  \sin^{\alpha p} \frac {t}{2}   {\mathrm d}t
                   .
 \]
For the functions  $f^*$  of the form (\ref{A6.53}), the equality
 \[
  E_{\lambda_n}^p (f^*)_{_{\scriptstyle  {p}}} = \frac {1}{2^{\alpha p}\int _0^{\tau }\sin ^{\alpha p }\frac t{2}{\mathrm d}t} \int\limits_0^{\tau}
 \omega _\alpha^p\Big(f^*, \frac t{\lambda_n}\Big)_{_{\scriptstyle  {p}}} {\mathrm d}t .
 \]
is verified similarly to the proof of equality (\ref{(6.53)}).
\end{proof}

In the following assertion, we give the upper estimates for the least constants $K_{n, \alpha ,p}(\tau ) $ in Jackson type inequalities
with the moduli of smoothness  $\omega _\alpha(f, \cdot)_{_{\scriptstyle  {p}}}$ and $\tau=\pi$. These estimates do not depend on
$n$ and are unimprovable in several important cases.

 \begin{corollary}
   \label{Th.3} For any $n\in {\mathbb N}$ and $\alpha>0$, the following inequalities are true:
  \begin{equation}\label{(6.14)}
 K_{n, \alpha ,p}^p(\pi )\le \frac 1{2^{\frac { \alpha  p }2-1}I_n(\frac { \alpha  p }2)}\le \frac {\frac { \alpha  p }2+1}{2^{ \alpha  p}+2^{\frac { \alpha  p }2-1}(\frac { \alpha  p}2+1)\sigma (\frac { \alpha  p }2)}, \end{equation}
 where the quantities  $I_n(s)$, $ s >0$, are defined by (\ref{(6.12)}), and
 \[
 \sigma ( s ):=-\sum _{\alpha=[\frac { s }2]+1}^{\infty } { s \choose 2\alpha}
\frac 1{2^{2\alpha-1}}\bigg(\frac {1-(-1)^{[ s ]}}2 {2\alpha \choose \alpha}
 - \sum _{j=0}^{\alpha-1} {2\alpha \choose j}\frac 2{2(\alpha-j)^2-1}\bigg),
 \]
($[ s ]$ is the integer part of the number $ s $). If  $\frac { \alpha  p }2\in {\mathbb N},$
then the value  $\sigma (\frac { \alpha  p }2)=0$ and
 $$
  K_{n, \alpha ,p}^p(\pi )\le \frac {\frac { \alpha  p }2+1}{2^{ \alpha  p}}, \ \ \
\frac { \alpha  p }2\in {\mathbb N}, \ \ \ n\in {\mathbb N}.
 \eqno (\ref{(6.14)}')
 $$
  \end{corollary}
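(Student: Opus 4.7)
The first inequality in $(\ref{(6.14)})$ follows at once from Corollary~\ref{Th.2}. Starting from $(\ref{(6.11)})$, the plan is to use that $\omega_\alpha(f,\cdot)_{_{\scriptstyle p}}$ is non-decreasing in its second argument, so $\omega_\alpha^p(f,t/\lambda_n)_{_{\scriptstyle p}}\le \omega_\alpha^p(f,\pi/\lambda_n)_{_{\scriptstyle p}}$ for $t\in[0,\pi]$; pulling this estimate outside the integral and using $\int_0^\pi\sin t\,dt=2$ immediately yields $K_{n,\alpha,p}^p(\pi)\le 1/(2^{\alpha p/2-1}I_n(\alpha p/2))$.

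For the second inequality, the goal is the $n$-independent lower bound $I_n(\alpha p/2)\ge 2^{\alpha p/2+1}/(\alpha p/2+1)+\sigma(\alpha p/2)$. Writing $s=\alpha p/2$ and $\theta=\lambda_k/\lambda_n\ge 1$, this reduces to proving
\[
J(\theta):=\int_0^\pi (1-\cos\theta t)^s\sin t\,dt\ \ge\ \frac{2^{s+1}}{s+1}+\sigma(s),\qquad \theta\ge 1.
\]
My plan is to expand $(1-\cos\theta t)^s=\sum_{j=0}^\infty(-1)^j\binom{s}{j}\cos^j(\theta t)$ via the generalized binomial series (absolutely convergent, since $|\binom{s}{j}|\sim C\,j^{-s-1}$ for $s>0$), apply the De Moivre identity $\cos^j u=2^{-j}\sum_{k=0}^{j}\binom{j}{k}\cos((j-2k)u)$, and integrate term-by-term against $\sin t$ using the elementary identity $\int_0^\pi\cos(\mu\theta t)\sin t\,dt=(1+\cos(\mu\theta\pi))/(1-\mu^2\theta^2)$ for $\mu\theta\ne\pm 1$.

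The main obstacle I anticipate is organizing the resulting double sum so that a ``polynomial'' contribution, controllable by the same Stepanets--Serdyuk-type argument used in Corollary~\ref{Th.2} for the integer case, furnishes the baseline value $2^{s+1}/(s+1)$, while the remaining infinite ``tail'' --- indices $2m\ge 2[s/2]+2$ --- is bounded from below uniformly in $\theta\ge 1$ by $\sigma(s)$. Matching this worst-case tail bound to the explicit formula for $\sigma(s)$ requires careful bookkeeping: the weight $\binom{s}{2m}/2^{2m-1}$ records the $2m$-th binomial term, the combination $\frac{1-(-1)^{[s]}}{2}\binom{2m}{m}$ retains the DC harmonic precisely when $[s]$ is odd, and the inner sum $\sum_{j=0}^{m-1}\binom{2m}{j}\frac{2}{2(m-j)^2-1}$ collects the non-DC cosine contributions.

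Finally, when $s=\alpha p/2\in\mathbb{N}$, every coefficient $\binom{s}{2m}$ with $m\ge [s/2]+1$ vanishes (since then $2m>s$), so each summand in $\sigma(s)$ is zero; combining $\sigma(s)=0$ with the exact evaluation $I_n(\alpha p/2)=2^{\alpha p/2+1}/(\alpha p/2+1)$ from $(\ref{(6.13)})$ immediately gives the sharper inequality $(\ref{(6.14)}')$.
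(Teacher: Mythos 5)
Your overall structure coincides with the paper's: the first inequality in (\ref{(6.14)}) is extracted from Corollary~\ref{Th.2} exactly as you describe (monotonicity of $\omega_\alpha(f,\cdot)_{_{\scriptstyle p}}$ plus $\int_0^\pi\sin t\,{\mathrm d}t=2$ applied to (\ref{(6.11)})), the second inequality is correctly reduced to the $n$-independent bound $I_n(s)\ge \frac{2^{s+1}}{s+1}+\sigma(s)$ with $s=\frac{\alpha p}{2}$, which in turn reduces to $\int_0^\pi(1-\cos\theta t)^s\sin t\,{\mathrm d}t\ge \frac{2^{s+1}}{s+1}+\sigma(s)$ for $\theta\ge1$, and your observation that $\sigma(s)=0$ for integer $s$ because $\binom{s}{2m}=0$ whenever $2m>s$ is a correct (and more explicit) justification of $(\ref{(6.14)}')$ than the paper gives; combining it with (\ref{(6.13)}) is equivalent to the paper's route through Corollary~\ref{Th.2}.

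The one substantive divergence is how the key integral inequality is handled. The paper does not prove it at all: it is imported verbatim by citation from \cite{Stepanets_Serdyuk_2002}. You instead sketch a derivation via the generalized binomial series, De Moivre's identity, and term-by-term integration using $\int_0^\pi\cos(a t)\sin t\,{\mathrm d}t=\frac{1+\cos(a\pi)}{1-a^2}$ --- all of which is legitimate (the series converges absolutely for $s>0$ since $|\binom{s}{j}|\asymp j^{-s-1}$, and your integration formula is correct). But the decisive step --- showing that the resulting tail over indices $m\ge[\frac s2]+1$ is minorized \emph{uniformly in} $\theta\ge1$ by exactly $\sigma(s)$, and that the remaining ``polynomial'' part dominates $\frac{2^{s+1}}{s+1}$ --- is only announced, not executed; as it stands this is precisely the content of the cited inequality, so you have replaced a citation by an unfinished computation. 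If the Stepanets--Serdyuk inequality is admitted as known, your argument is complete and matches the paper; if you intend to prove it yourself, the tail estimate is a genuine missing piece that must be carried out before the second inequality in (\ref{(6.14)}) is established.
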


\begin{proof}
The first inequality in (\ref{(6.14)}) and inequality $(\ref{(6.14)}')$ follow from Corollary \ref{Th.2}. The second inequality in
(\ref{(6.14)}) follows from the relation
 \[
 I_n\Big(\frac { \alpha  p }2\Big)\ge \frac {2^{\frac { \alpha  p
 }2+1}}{\frac { \alpha  p }2+1} + \sigma \Big(\frac { \alpha  p }2\Big), \ \ n\in {\mathbb N},\ \  \alpha >0,
 \]
 which is a consequence of the   inequality \cite{Stepanets_Serdyuk_2002}:
\[
 \int\limits _0^{\pi }(1-\cos \theta t)^{s }\sin tdt\ge \frac {2^{s +1}}{s +1} + \sigma (s ), \quad \theta \ge 1, \quad \ s >0.
\]
\end{proof}

The following assertion  establishes the uniform boundedness of the constants $K_{n, \alpha ,p}(\pi )$
with respect to the parameters  $n\in {\mathbb N}$ and $1\le p<\infty$.

\begin{corollary} \label{Theorem 2.4.}
   Assume that the function  $f\in B{\mathcal S}^p,$ $1\le p<\infty$, has the Fourier series of the form (\ref{Fourier_Series})  and  $\|f-A_0(f)\|_{_{\scriptstyle  p}}\not= 0$. Then for any   $n\in {\mathbb N}$ and $\alpha>0$,
  \begin{equation}\label{(6.16)}
       E_{\lambda_n} (f)_{_{\scriptstyle  {p}}} < \frac {(4/3)^{1/p}}{2^{\alpha/2}}\omega_\alpha
  \Big(f, \frac {\pi }{\lambda_n}\Big)_{_{\scriptstyle p}} \le  \frac {4}{3\cdot 2^{\alpha/2}}\omega_\alpha
  \Big(f, \frac {\pi }{\lambda_n}\Big)_{_{\scriptstyle p}}.
  \end{equation}
  Furthermore, in the case where $\alpha=m\in {\mathbb N}$, the following more accurate estimate holds:
  \begin{equation}\label{(6.161)}
       E_{\lambda_n} (f)_{_{\scriptstyle  {p}}} < \frac {4-2\sqrt{2}}{2^{m/2}}\omega_m
  \Big(f, \frac {\pi }{\lambda_n}\Big)_{_{\scriptstyle p}}.
   \end{equation}
\end{corollary}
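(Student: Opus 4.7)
The plan is to derive both bounds directly from Corollary~\ref{Th.3}, reducing each to an elementary numerical inequality involving $(s+1)/2^s$ with $s=\alpha p/2$. The hypothesis $\|f-A_0(f)\|_{_{\scriptstyle p}}\ne 0$ will be used only to guarantee $\omega_\alpha(f,\pi/\lambda_n)_{_{\scriptstyle p}}>0$: it forces at least one Fourier coefficient $A_k$ with $k\ne 0$ to be nonzero, so the sum defining $\omega_\alpha$ is strictly positive at a suitable $h$, rendering the ratio $E_{\lambda_n}(f)_{_{\scriptstyle p}}/\omega_\alpha(f,\pi/\lambda_n)_{_{\scriptstyle p}}$ well-defined and the claimed strict inequalities meaningful.

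For inequality~(\ref{(6.16)}), I would first apply the first estimate in Corollary~\ref{Th.3} to obtain $K_{n,\alpha,p}^p(\pi)\le 1/(2^{\alpha p/2-1}I_n(\alpha p/2))$, reducing the target $K^p<(4/3)\cdot 2^{-\alpha p/2}$ to the single inequality $I_n(\alpha p/2)>3/2$. Combining the bound $I_n(s)\ge 2^{s+1}/(s+1)+\sigma(s)$ (established in the proof of Corollary~\ref{Th.3}) with the calculus fact $\min_{s\ge 0}2^{s+1}/(s+1)=e\ln 2\approx 1.884$, attained at $s=1/\ln 2-1$, gives a margin above $3/2$; absorbing the nonpositive correction $\sigma(s)$ then requires only a uniform upper bound on $|\sigma(s)|$, which I would obtain by term-by-term estimation of its defining series. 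Taking $p$-th roots yields the first form of~(\ref{(6.16)}), and the second form with constant $4/3$ follows from $(4/3)^{1/p}\le 4/3$ for $p\ge 1$.

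For the sharper inequality~(\ref{(6.161)}) with $\alpha=m\in\mathbb{N}$, the strategy is to invoke the refinement $(\ref{(6.14)}')$ of Corollary~\ref{Th.3}: whenever $mp/2\in\mathbb{N}$ it gives $K_{n,m,p}^p(\pi)\le(mp/2+1)/2^{mp}$, and raising the claimed bound to the $p$th power reduces the problem to verifying $(mp/2+1)/2^{mp/2}<(4-2\sqrt{2})^p$. This inequality holds because $\max_{s\ge 0}(s+1)/2^s=2/(e\ln 2)\approx 1.062$ is strictly less than $4-2\sqrt{2}\approx 1.172$, which in turn is dominated by $(4-2\sqrt{2})^p$ for every $p\ge 1$ since $4-2\sqrt{2}>1$. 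The residual cases $mp/2\notin\mathbb{N}$ (arising when $m$ is odd and $p$ is not even) will be handled by the full Corollary~\ref{Th.3} estimate together with the same control on $\sigma(s)$ as in the proof of~(\ref{(6.16)}).

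The main technical obstacle will be the uniform control of $\sigma(s)$ for non-integer $s$: its series has terms of mixed sign, with the weights $2/(2(\alpha-j)^2-1)$ coupled to binomial coefficients in a way that requires careful grouping of the contributions according to the outer summation index. A cleaner alternative route would be to invoke Corollary~\ref{Th.2a} with a judiciously chosen $\tau\le 3\pi/4$, thereby replacing the $\sigma$-control step by a doubling-type estimate relating $\omega_\alpha(f,\tau/\lambda_n)_{_{\scriptstyle p}}$ to $\omega_\alpha(f,\pi/\lambda_n)_{_{\scriptstyle p}}$.
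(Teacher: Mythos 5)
Your reduction is the right one and matches the paper's skeleton: both arguments pass through the first inequality of Corollary~\ref{Th.3}, so that (\ref{(6.16)}) becomes the claim $I_n(\alpha p/2)>3/2$ and (\ref{(6.161)}) becomes $I_n(mp/2)\ge 2(4-2\sqrt{2})^{-p}$ (which equals $1+1/\sqrt{2}$ at the worst case $p=1$). The difference, and the gap, is in how that lower bound on $I_n$ is supplied. The paper never touches $\sigma(s)$ here: it quotes from Stepanets--Serdyuk the elementary estimates $I_n(s)\ge 2$ for $s\ge 1$ and $I_n(s)\ge 1+2^{s-1}$ for $s\in(0,1)$, which give $K_{n,\alpha,p}^p(\pi)\le 2\cdot 2^{-s}/(1+2^{s-1})<\tfrac43\,2^{-s}$ immediately (and explain where the constant $4/3=2/(1+2^{-1})$ comes from), together with $I_n(mp/2)\ge 1+1/\sqrt{2}$ for the sharper bound. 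Your route instead uses $I_n(s)\ge 2^{s+1}/(s+1)+\sigma(s)$ and must therefore prove a uniform lower bound $\sigma(s)>3/2-2^{s+1}/(s+1)$, i.e.\ roughly $\sigma(s)>-0.38$; you explicitly defer this ``term-by-term estimation,'' so the decisive quantitative input is missing. The situation is worse for (\ref{(6.161)}) when $mp/2\notin{\mathbb N}$: there you need $I_n(s)>1+1/\sqrt{2}\approx 1.707$, while $\min_s 2^{s+1}/(s+1)=e\ln 2\approx 1.884$, leaving a margin of only about $0.18$ for $\sigma$; nothing in your sketch (nor in the paper) gives such a bound on $\sigma$, and its sign is not established either. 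The fallback via Corollary~\ref{Th.2a} does not close the argument for the same reason you note: it would require a doubling-type comparison of $\omega_\alpha(f,\tau/\lambda_n)_{p}$ with $\omega_\alpha(f,\pi/\lambda_n)_{p}$ that is not proved in this setting.

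A secondary point: the hypothesis $\|f-A_0(f)\|_{p}\ne 0$ does more than make the ratio well defined. To get the \emph{strict} inequalities you need either that the numerical bound on $K^p$ is itself strict (which your scheme does deliver once $I_n(s)>3/2$ is strict, and which the paper's bounds deliver since $2/(1+2^{s-1})<4/3$ for $s>0$), or the integral argument of Corollary~\ref{Th.2}, where $\omega_\alpha^p(f,t/\lambda_n)<\omega_\alpha^p(f,\pi/\lambda_n)$ near $t=0$. Spelling this out would be needed in a complete write-up, but the essential missing piece remains the lower bound on $I_n(s)$ for non-integer $s$.
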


\begin{proof} It was shown in   \cite{Stepanets_Serdyuk_2002} that  $I_n(s)\ge 2$ when $s\ge 1$
and  $I_n(s)\ge 1+2^{s-1}$ when $s\in (0,1)$. Combining these two estimates and (\ref{(6.14)}), we get   (\ref{(6.16)}).

Relation (\ref{(6.161)}) follows from the estimate $I_n(\frac{mp}2)\ge 1+\frac 1{\sqrt{2}}$, which is a consequence of the above estimates for the value of $I_n(s)$
in the case when $m\in {\mathbb N}$ and $1\le p<\infty$ \cite{Stepanets_Serdyuk_2002}.
 \end{proof}


As noted above, for $p=2$ the sets $B{\mathcal S}^p=B{\mathcal S}^2$ coincide with the sets of $B_2$-a.p. functions.
Given the importance of this case, we give the formulation of the corresponding statements for the classical modulus of smoothness
$\omega_m $, which follow from the consequence \ref{Th.2}.

 \begin{corollary} \label{Theorem 2.5.}
     For any $B_2$-a.p. function $f$ having the Fourier series of the form (\ref{Fourier_Series}),
  \begin{equation}\label{(6.162)}
     E_{\lambda_n}^2 (f)_{_{\scriptstyle  {2}}} \le \frac {m+1}{2^{2m+1}}
     \int\limits_0^{\pi } \omega _m^2\Big(f, \frac t{\lambda_n}\Big)_{_{\scriptstyle  {2}}}  \sin t\,{\mathrm d}t,\quad m,\, n\in {\mathbb N}.
   \end{equation}
Inequalities   (\ref{(6.162)}) can not be improved for any  $m$ and $n\in {\mathbb N}$.
\end{corollary}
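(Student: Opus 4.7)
The plan is to derive this assertion as an immediate specialization of Corollary \ref{Th.2} with the parameter choice $p=2$ and $\alpha=m\in{\mathbb N}$. First I would record two identifications made earlier in the paper: since $p=2$, the Besicovitch--Stepanets norm (\ref{norm_Sp}) coincides via the Parseval identity with the $B_2$-a.p. norm, so "$f$ is $B_2$-a.p." is equivalent to "$f\in B{\mathcal S}^2$"; and by the discussion following (\ref{general_modulus}), the classical modulus $\omega_m(f,\delta)_2$ coincides with the generalized modulus $\omega_{\varphi_m}(f,\delta)_2$ for the weight $\varphi_m(t)=2^{m/2}(1-\cos t)^{m/2}$. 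Thus the hypotheses needed to invoke Corollary \ref{Th.2} are satisfied.

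Next I would apply Corollary \ref{Th.2}. With $\alpha=m$ and $p=2$ we have $\alpha p/2=m\in{\mathbb N}$, so formula (\ref{(6.13)}) yields $I_n(m)=2^{m+1}/(m+1)$. The constant in (\ref{(6.11)}) then becomes
\[
\frac{1}{2^{\alpha p/2} I_n(\alpha p/2)}=\frac{1}{2^{m}\cdot 2^{m+1}/(m+1)}=\frac{m+1}{2^{2m+1}},
\]
which is precisely the constant claimed in (\ref{(6.162)}). This establishes the inequality.

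For the unimprovability, I would appeal directly to the sharpness clause of Corollary \ref{Th.2}: the test function $f^*(x)=\gamma+\beta e^{-i\lambda_n x}+\delta e^{i\lambda_n x}$ from (\ref{A6.53}) with arbitrary complex $\gamma,\beta,\delta$ realizes equality in (\ref{(6.53)}). Specializing that equality to $p=2$ and $\alpha=m\in{\mathbb N}$ yields equality in (\ref{(6.162)}), so the inequality cannot be improved for any fixed $m,n\in{\mathbb N}$.

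There is no substantive obstacle here, since the analytical content (the choice of weight $v_1(t)=1-\cos t$, the evaluation of the extremal integral, and the construction of the extremizer) was already carried out in the proof of Corollary \ref{Th.2}. The main thing to verify is the arithmetic that $2^{\alpha p/2} I_n(\alpha p/2)=2^{2m+1}/(m+1)$ in this regime, and the mild bookkeeping that the restriction $\alpha p/2\in{\mathbb N}$ (required both for (\ref{(6.13)}) and for the sharpness conclusion) is automatically met for every $m\in{\mathbb N}$ when $p=2$.
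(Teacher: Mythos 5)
Your proposal is correct and follows the same route as the paper, which derives this corollary directly from Corollary \ref{Th.2} by setting $p=2$ and $\alpha=m$, so that $\alpha p/2=m\in{\mathbb N}$, formula (\ref{(6.13)}) gives $I_n(m)=2^{m+1}/(m+1)$, and the constant $\frac{1}{2^{m}I_n(m)}=\frac{m+1}{2^{2m+1}}$, with sharpness inherited from the extremal function (\ref{A6.53}). The arithmetic and the bookkeeping on the integrality condition are exactly as the paper intends.
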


 \begin{corollary} \label{Theorem 2.6.}
     For any $B_2$-a.p. function $f$ with the Fourier series of the form (\ref{Fourier_Series}) such that
     $\|f-A_0(f)\|_{_{\scriptstyle  p}}\not= 0$,  the following inequalities hold:
  \begin{equation}\label{(6.163)}
       E_{\lambda_n} (f)_{_{\scriptstyle  {2}}} < \frac {\sqrt{m+1}}{2^m}\omega_m
  \Big(f, \frac {\pi }{\lambda_n}\Big)_{_{\scriptstyle 2}},\quad m,\, n\in {\mathbb N}.
   \end{equation}
\end{corollary}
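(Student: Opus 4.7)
The plan is to obtain (\ref{(6.163)}) as a direct consequence of Corollary \ref{Theorem 2.5.} by bounding the weighted integral of $\omega_m^2(f,\cdot)_2$ from above by its supremum value on $[0,\pi/\lambda_n]$. Concretely, I would first invoke (\ref{(6.162)}), then use the monotonicity of $\delta\mapsto\omega_m(f,\delta)_2$ to pull the supremum value $\omega_m(f,\pi/\lambda_n)_2$ outside the integral. Since the generalized modulus (\ref{general_modulus}) is defined as a supremum over the set $\{|h|\le\delta\}$, which expands with $\delta$, the map is nondecreasing, so $\omega_m(f,t/\lambda_n)_2\le\omega_m(f,\pi/\lambda_n)_2$ for every $t\in[0,\pi]$.

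Substituting this bound into (\ref{(6.162)}) and using $\int_0^\pi\sin t\,{\mathrm d}t=2$ gives
\[
E_{\lambda_n}^2(f)_2\le\frac{m+1}{2^{2m+1}}\omega_m^2\Big(f,\frac{\pi}{\lambda_n}\Big)_2\int_0^\pi\sin t\,{\mathrm d}t=\frac{m+1}{2^{2m}}\omega_m^2\Big(f,\frac{\pi}{\lambda_n}\Big)_2,
\]
and extracting square roots yields the nonstrict form of (\ref{(6.163)}).

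The remaining task is to upgrade ``$\le$'' to ``$<$'' under the hypothesis $\|f-A_0(f)\|_2\ne 0$. For this I would show that the map $t\mapsto\omega_m(f,t/\lambda_n)_2$ is continuous on $[0,\pi]$ and vanishes at $t=0$: the latter because $\varphi_m(0)=0$, the former by dominated convergence applied to the series $\sum_k\varphi_m^2(\lambda_k h)|A_k|^2$, which admits the summable majorant $2^{2m}|A_k|^2$. The hypothesis forces at least one $A_k$ with $k\ne 0$ to be nonzero, and hence $\omega_m(f,\pi/\lambda_n)_2>0$ (one may, for example, pick any small $h>0$ that makes the corresponding summand positive). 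Continuity together with $\omega_m(f,0)_2=0<\omega_m(f,\pi/\lambda_n)_2$ produces an interval $[0,\delta_0)\subset[0,\pi]$ on which $\omega_m^2(f,t/\lambda_n)_2<\omega_m^2(f,\pi/\lambda_n)_2$, and this makes the integral estimate, and hence (\ref{(6.163)}), strict.

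I do not anticipate any genuine obstacle, since the main analytic content is already carried by Corollary \ref{Theorem 2.5.}. The only mildly subtle step is the justification of strict inequality, which reduces to a standard continuity argument for the generalized modulus of smoothness in $B\mathcal{S}^2$; an entirely analogous strengthening would also upgrade the inequality in Corollary \ref{Theorem 2.4.} to strict form under the same hypothesis.
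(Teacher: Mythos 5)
Your proposal is correct and follows essentially the same route as the paper, which states that Corollary \ref{Theorem 2.6.} follows from Corollary \ref{Th.2}: specialize (\ref{(6.11)}) to $p=2$, $\alpha=m$ (i.e.\ (\ref{(6.162)})), bound the weighted integral by $2\,\omega_m^2(f,\pi/\lambda_n)_2$, and obtain strictness from the fact that $\omega_m(f,\delta)_2\to 0$ as $\delta\to 0+$ while $\omega_m(f,\pi/\lambda_n)_2>0$ under the hypothesis $\|f-A_0(f)\|\neq 0$. Your closing remark is the only inaccuracy: Corollary \ref{Theorem 2.4.} is already stated with strict inequality, so no further upgrade is needed there.
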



Inequalities (\ref{(6.162)}) and (\ref{(6.163)}) complement the results obtained in \cite{Babenko_Konareva_2019, Prytula_1972, Prytula_Yatsymirskyi_1983}, etc.,
for the $B_2$-a.p. functions. In the spaces  ${\mathcal S}^p$  of functions of one and several variables, Theorem  \ref{Th.1} and
Corollaries  \ref{Th.2}, \ref{Th.3} and \ref{Theorem 2.4.} were proved in \cite{Stepanets_Serdyuk_2002} and
 \cite{Abdullayev_Ozkartepe_Savchuk_Shidlich_2019}, respectively. In the spaces $L_2$,
 for classical moduli of smoothness inequality  (\ref{(6.11)}) was proved by Chernykh \cite{Chernykh_1967_MZ}.
 The inequalities of this type were also investigated in   \cite{Taikov_1976, Taikov_1979, Vasil'ev_2001, Voicexivskij_2002, Serdyuk_2003, Vakarchuk_2004, Gorbachuk_Grushka_Torba_2005, Babenko_Savela_2012, Vakarchuk_2016, Babenko_Konareva_2019, Abdullayev_Serdyuk_Shidlich_2021}, etc.



\section{Inverse approximation theorems.}

 \begin{theorem}
       \label{Inverse_Theorem}
       Assume that the function   $ f\in  B{\mathcal S}^p,$ $1\le p<\infty$,
         has the Fourier series of the form (\ref{Fourier_Series}), the function
         $\varphi\in \Phi$ does not decrease  on  $[0,\tau]$, $\tau>0$, and
       $\varphi(\tau)=\max\{\varphi(t):t\in {\mathbb R}\}$. Then for any  $n\in {\mathbb N}$,
       the following inequality holds:
       \begin{equation}\label{S_M.12}
       \omega _\varphi^p\Big(f, \frac{\tau}{\lambda_n}\Big)
       \le    \sum _{\nu =1}^{n}\Big(\varphi^p\Big(\frac{\tau\lambda_\nu }{\lambda_n}\Big)-
          \varphi^p\Big(\frac{ \tau\lambda_{\nu-1}}{\lambda_n}\Big)\Big) E_{\lambda_\nu}^p(f)_{_{\scriptstyle p}}
       .
       \end{equation}
 \end{theorem}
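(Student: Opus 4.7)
The plan is to start from the definition \eqref{general_modulus} of $\omega_\varphi$, bound the pointwise factor $\varphi^p(\lambda_k h)$ using the two monotonicity hypotheses on $\varphi$, then rearrange the resulting single sum into the desired sum of best approximations by Abel summation.

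First I fix $h$ with $|h|\le \tau/\lambda_n$ and split the spectral index set into $\{|k|\le n\}$ and $\{|k|>n\}$. For $|k|\le n$ we have $|\lambda_k h|\le \tau\lambda_{|k|}/\lambda_n\le \tau$, and since $\varphi$ is even and nondecreasing on $[0,\tau]$ this gives
\[
\varphi^p(\lambda_k h)\le \varphi^p\bigl(\tau\lambda_{|k|}/\lambda_n\bigr).
\]
For $|k|>n$ the argument $\lambda_k h$ may exceed $\tau$ in absolute value, but the hypothesis $\varphi(\tau)=\max_{t\in\mathbb{R}}\varphi(t)$ yields
\[
\varphi^p(\lambda_k h)\le \varphi^p(\tau)=\varphi^p\bigl(\tau\lambda_n/\lambda_n\bigr).
\]
Writing $m_k:=\min(|k|,n)$, both estimates combine into $\varphi^p(\lambda_k h)\le \varphi^p(\tau\lambda_{m_k}/\lambda_n)$ for every $k\in\mathbb{Z}$.

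Next, because $\varphi(0)=0$, I telescope:
\[
\varphi^p\bigl(\tau\lambda_{m_k}/\lambda_n\bigr)=\sum_{\nu=1}^{m_k}\Bigl(\varphi^p\bigl(\tau\lambda_\nu/\lambda_n\bigr)-\varphi^p\bigl(\tau\lambda_{\nu-1}/\lambda_n\bigr)\Bigr).
\]
Plugging this into the bound above and interchanging the order of summation (all terms are nonnegative, so this is legitimate),
\[
\sum_{k\in\mathbb Z}\varphi^p(\lambda_k h)|A_k(f)|^p\le\sum_{\nu=1}^{n}\Bigl(\varphi^p\bigl(\tau\lambda_\nu/\lambda_n\bigr)-\varphi^p\bigl(\tau\lambda_{\nu-1}/\lambda_n\bigr)\Bigr)\sum_{k:\,m_k\ge \nu}|A_k(f)|^p.
\]
For $1\le \nu\le n$, the condition $m_k\ge\nu$ is equivalent to $|k|\ge\nu$, so by \eqref{Best_Approx} the inner sum equals $E_{\lambda_\nu}^p(f)_{_{\scriptstyle p}}$.

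The resulting bound is independent of $h$, so taking the supremum over $|h|\le \tau/\lambda_n$ on the left yields \eqref{S_M.12}. There is no real obstacle here: the only mild subtlety is that the monotonicity bound $\varphi(\lambda_kh)\le \varphi(\tau\lambda_{|k|}/\lambda_n)$ is only valid for $|k|\le n$, and the global maximum hypothesis $\varphi(\tau)=\max\varphi$ is precisely what is needed to handle the tail $|k|>n$ within the same telescoping framework.
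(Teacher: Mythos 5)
Your proposal is correct and follows essentially the same route as the paper: split the spectrum at $n$, use the monotonicity of $\varphi$ on $[0,\tau]$ to bound $\varphi^p(\lambda_k h)$ for $|k|\le n$ and the hypothesis $\varphi(\tau)=\max\varphi$ for the tail, then regroup the resulting sum into best approximations $E_{\lambda_\nu}^p(f)_{_{\scriptstyle p}}$. The only cosmetic difference is that you perform the regrouping by telescoping $\varphi^p(\tau\lambda_{m_k}/\lambda_n)$ and interchanging the (nonnegative) double sum directly, whereas the paper invokes a prepackaged Abel-transformation lemma from Stepanets--Serdyuk; the computations are identical.
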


\begin{proof}
Let us use the scheme of the proof from  \cite{Stepanets_Serdyuk_2002, Abdullayev_Chaichenko_Shidlich_2021},
 taking into account the peculiarities of the spaces $B{\mathcal S}^p$.
As above, for any $f\in B{\mathcal S}^p$, $\varphi\in \Phi$ and $h\in {\mathbb R}$, we denote by  $\|\Delta_h^\varphi f\|_{_{\scriptstyle p}}$ the usual norm (\ref{norm_Sp}) of the function $\Delta_h^\varphi f$ satisfying relation (\ref{Fourier_Coeff_Delta}) (if such a function $\Delta_h^\varphi f\in B$-a.p. exists) or the  $l_p$-norm of the sequence  $ \{\varphi(\lambda_kh) A_k(f) \}_{k\in {\mathbb Z}}$ if such a function $\Delta_h^\varphi f\in B$-a.p. does not exist.
We have
\begin{equation}\label{Proof_Th2_1}
   \|\Delta_h^\varphi f\|_{_{\scriptstyle p}}^p=
  \sum_{{k}\in {\mathbb Z}} \varphi^p(\lambda_kh)|A_k(f)|^p
  =\sum_{|k|< n} \varphi^p(\lambda_kh) |A_k(f)|^p+\sum_{|k|\ge n} \varphi^p(\lambda_kh) |A_k(f)|^p.
   \end{equation}
 It is clear that the second term on the right-hand side (\ref{Proof_Th2_1})  does not exceed the value
 \[
 \varphi^p(\tau)\sum_{|k|\ge n}  |A_k(f)|^p=\varphi^p(\tau)E_{\lambda_n}^p(f)_{_{\scriptstyle p}},
 \]
 and due to the parity and non-decreasing function $\varphi$ on the interval $[0,\tau]$, for  $|h|\le \tau /\lambda_n$
 \[
 \sum_{|k|< n} \varphi^p(\lambda_kh) |A_k(f)|^p\le \sum_{\nu=1}^{n-1}  \varphi^p\Big(\frac{ \tau\lambda_\nu}{\lambda_n}\Big)
 (|A_{-\nu}(f)|^p+|A_\nu(f)|^p).
 \]
Therefore, in view of the monotonicity of the sequence of Fourier exponents $\{\lambda_k\}_{k\in {\mathbb Z}}$, we obtain
 \begin{equation}\label{Proof_Th2_2}
   \|\Delta_h^\varphi f\|_{_{\scriptstyle p}}^p\le
 \varphi^p(\tau)E_{\lambda_n}^p(f)_{_{\scriptstyle p}}+\sum_{\nu=1}^{n-1}  \varphi^p\Big(\frac{ \tau\lambda_\nu}{\lambda_n}\Big)
 H_\nu^p(f),
   \end{equation}
where $H_\nu^p(f)=(|A_{-\nu}(f)|^p+|A_\nu(f)|^p)$. Further, we use the following assertion from  \cite{Stepanets_Serdyuk_2002}:

 \begin{lemma}(\cite{Stepanets_Serdyuk_2002})
       \label{Lemma_31}
       Assume that the numerical series $ \sum _{\nu =1}^{\infty} c_{\nu } $ is convergent.
       Then for any sequence $ \beta _{\nu },$ $\nu \in {\mathbb N},$ the
       following equality holds for all positive integer  ${N_1}$  and ${N_2},$ ${N_1}\le {N_2}$:
        \begin{equation}\label{Proof_Th2_3}
        \sum _{\nu ={N_1}}^{N_2} \beta _{\nu }c_{\nu }= \beta _a\sum _{\nu ={N_1}}^{\infty }c_{\nu }+\sum _{\nu ={N_1}+1}^{N_2}( \beta _{\nu } - \beta _{\nu -1})\sum _{i=\nu }^{\infty }c_i- \beta _{N_2}\sum _{\nu ={N_2}+1}^{\infty }c_{\nu }.
        \end{equation}
 \end{lemma}

Setting  ${N_1}=1$, ${N_2}=n-1$,
 $ \beta _{\nu }=\varphi^p (\frac{ \tau\lambda_\nu}{\lambda_n} )$ and
 $c_{\nu }=H_\nu^p(f)$ in (\ref{Proof_Th2_3}), taking into account (\ref{Best_Approx}), we get
 \[
 \sum_{\nu=1}^{n-1}  \varphi^p\Big(\frac{ \tau\lambda_\nu}{\lambda_n}\Big)
 H_\nu^p(f)=\varphi^p\Big(\frac{ \tau\lambda_1}{\lambda_n}\Big)
 \sum_{\nu=1}^{\infty}   H_\nu^p(f)
 \]
 \[
   +\sum_{\nu=2}^{n-1}  \Big(\varphi^p\Big(\frac{\tau\lambda_\nu }{\lambda_n}\Big)-
    \varphi^p\Big(\frac{ \tau\lambda_{\nu-1}}{\lambda_n}\Big)\Big)\sum_{i=\nu}^\infty  H_i^p(f)-\varphi^p\Big(\frac{ \tau\lambda_{n-1}}{\lambda_n}\Big)
     \sum_{\nu=n}^{\infty}   H_\nu^p(f)
 \]
 \begin{equation}\label{Proof_Th2_4}
         = \sum_{\nu=1}^{n-1}  \Big(\varphi^p\Big(\frac{\tau\lambda_\nu }{\lambda_n}\Big)-
          \varphi^p\Big(\frac{ \tau\lambda_{\nu-1}}{\lambda_n}\Big)\Big) E_{\lambda_\nu}^p(f)_{_{\scriptstyle p}}
          -\varphi^p\Big(\frac{ \tau\lambda_{n-1}}{\lambda_n}\Big) E_{\lambda_n}^p(f)_{_{\scriptstyle p}}.
   \end{equation}
By virtue of (\ref{Proof_Th2_2}) and (\ref{Proof_Th2_4}), we get
 \[
    \|\Delta_h^\varphi f\|_{_{\scriptstyle p}}^p\le\sum_{\nu=1}^{n-1}  \Big(\varphi^p\Big(\frac{\tau\lambda_\nu }{\lambda_n}\Big)-
          \varphi^p\Big(\frac{ \tau\lambda_{\nu-1}}{\lambda_n}\Big)\Big) E_{\lambda_\nu}^p(f)_{_{\scriptstyle p}}
          -\varphi^p\Big(\frac{ \tau\lambda_{n-1}}{\lambda_n}\Big) E_{\lambda_n}^p(f)_{_{\scriptstyle p}}
 \]
\[
     + \varphi^p(\tau)E_{\lambda_n}^p(f)_{_{\scriptstyle p}}=\sum_{\nu=1}^{n }  \Big(\varphi^p\Big(\frac{\tau\lambda_\nu }{\lambda_n}\Big)-
          \varphi^p\Big(\frac{ \tau\lambda_{\nu-1}}{\lambda_n}\Big)\Big) E_{\lambda_\nu}^p(f)_{_{\scriptstyle p}},
\]
which yields (\ref{S_M.12}).
\end{proof}

Consider the case    $\varphi(t)=\varphi_\alpha  (t)=2^\alpha   |\sin (t/2) |^\alpha  $, $\alpha  >0$.
In this case, the function  $\varphi$ satisfies the conditions of Theorem
 \ref{Inverse_Theorem} with   $\tau=\pi$.  If $r=\alpha p \ge 1$, then using the inequality
$x^r  -y^r   \le r   x^{r  -1}(x-y),$ $x>0, y>0$ (see, for example, \cite[Ch.~1]{Hardy_Littlewood_Polya_1934}),
and ordinary trigonometric transformations for $\nu=1,2,\ldots,n,$  we get
\[
  \varphi^p\Big(\frac {\tau \lambda_\nu}{\lambda_n}\Big)-
  \varphi^p\Big(\frac {\tau \lambda_{\nu-1}}{\lambda_n}\Big)=
  2^{\alpha p}   \Big(\Big|\sin  \frac {\pi \lambda_\nu}{\lambda_n}  \Big|^{\alpha p}  -
  \Big|\sin  \frac {\pi \lambda_{\nu-1}}{\lambda_n}  \Big|^{\alpha p}  \Big)
\]
\[
    \le 2^{\alpha p}   {\alpha p}   |\sin  \frac {\pi \lambda_\nu}{\lambda_n}  \Big|^{{\alpha p}  -1}
    \Big|\sin  \frac {\pi \lambda_\nu}{\lambda_n}  -
  \sin  \frac {\pi \lambda_{\nu-1}}{\lambda_n}  \Big|
  \le {\alpha p}   \Big(\frac{2\pi }{\lambda_n}\Big)^{\alpha p}   \lambda_\nu^{{\alpha p}  -1}(\lambda_{\nu}-\lambda_{\nu-1}).
\]
If  $0<r  <1$, then the similar estimate can be obtained using the inequality
 $x^r  -y^r   \le r   y^{r  -1}(x-y)$, which holds for all   $x>0$ and $y>0$
 \cite[Ch.~1]{Hardy_Littlewood_Polya_1934}.

 \begin{corollary}
       \label{Corollary 21}
       Suppose  that  the function $ f\in  B{\mathcal S}^p$,    $1\le p<\infty$,  has the Fourier series of the form
       (\ref{Fourier_Series}). Then for any   $n\in {\mathbb N}$  and  $\alpha>0$,
       \begin{equation}\label{Inverse_Inequality}
       \omega _\alpha^p \Big(f, \frac{\pi}{\lambda_n}\Big)\le      \alpha p \Big(\frac{2\pi }{\lambda_n}\Big)^
       {\alpha p}
       \sum _{\nu =1}^{n}  \lambda_\nu^{\alpha p-1}(\lambda_{\nu}-\lambda_{\nu-1}) E_{\lambda_\nu}^p (f).
       \end{equation}
       If, in addition, the Fourier exponents  $\lambda_\nu$, $\nu\in {\mathbb N}$, satisfy the condition
       \begin{equation}\label{Lambda_Cond}
        \lambda_{\nu+1}-\lambda_{\nu} \le C ,\quad \nu=1,2,\ldots,
       \end{equation}
      with an absolute  constant $C>0$, then
       \begin{equation}\label{Inverse_Inequality_for_using}
       \omega _\alpha^p \Big(f, \frac{\pi}{\lambda_n}\Big)\le      \frac{\alpha p (2\pi)^{\alpha p}}{\lambda_n^{\alpha p}} C
       \sum _{\nu =1}^{n}  \lambda_\nu^{{\alpha p}-1}  E_{\lambda_\nu}^p (f).
       \end{equation}
\end{corollary}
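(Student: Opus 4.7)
The plan is to obtain Corollary \ref{Corollary 21} as a direct specialization of Theorem \ref{Inverse_Theorem}, followed by the termwise bound on $\varphi_\alpha^p(\pi\lambda_\nu/\lambda_n)-\varphi_\alpha^p(\pi\lambda_{\nu-1}/\lambda_n)$ already worked out in the paragraph preceding the corollary. The whole argument is really a matter of assembling the pieces and sweeping up the two power-inequality cases uniformly.

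First, I would verify the hypotheses of Theorem \ref{Inverse_Theorem} for $\varphi=\varphi_\alpha$ with $\tau=\pi$. The function $\varphi_\alpha(t)=2^\alpha|\sin(t/2)|^\alpha$ is continuous, nonnegative, even, vanishes at $0$, and its zero set $\{2\pi k:k\in\mathbb{Z}\}$ has Lebesgue measure zero, so $\varphi_\alpha\in\Phi$. Moreover, $\varphi_\alpha$ is nondecreasing on $[0,\pi]$ and attains its global maximum $2^\alpha$ at $t=\pi$. Hence Theorem \ref{Inverse_Theorem} applies and gives
\[
\omega_\alpha^p\Bigl(f,\frac{\pi}{\lambda_n}\Bigr)\le\sum_{\nu=1}^{n}\Bigl(\varphi_\alpha^p\Bigl(\frac{\pi\lambda_\nu}{\lambda_n}\Bigr)-\varphi_\alpha^p\Bigl(\frac{\pi\lambda_{\nu-1}}{\lambda_n}\Bigr)\Bigr)E_{\lambda_\nu}^p(f)_{_{\scriptstyle p}}.
\]

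Second, I would estimate each of the differences $\varphi_\alpha^p(\pi\lambda_\nu/\lambda_n)-\varphi_\alpha^p(\pi\lambda_{\nu-1}/\lambda_n)$ exactly as in the text preceding the corollary statement. For $r=\alpha p\ge 1$ one invokes the Hardy--Littlewood--P\'olya inequality $x^r-y^r\le r\,x^{r-1}(x-y)$ ($x\ge y>0$); for $0<r<1$ the corresponding reversed inequality $x^r-y^r\le r\,y^{r-1}(x-y)$ is available. In each case, combining this with the elementary bounds $|\sin a-\sin b|\le|a-b|$ and $|\sin a|\le|a|$, together with the monotonicity of $\{\lambda_k\}$, produces the same closed-form upper bound
\[
\varphi_\alpha^p\Bigl(\frac{\pi\lambda_\nu}{\lambda_n}\Bigr)-\varphi_\alpha^p\Bigl(\frac{\pi\lambda_{\nu-1}}{\lambda_n}\Bigr)\le\alpha p\Bigl(\frac{2\pi}{\lambda_n}\Bigr)^{\alpha p}\lambda_\nu^{\alpha p-1}(\lambda_\nu-\lambda_{\nu-1}).
\]
Summing this over $\nu=1,\dots,n$ and substituting into the inequality furnished by Theorem \ref{Inverse_Theorem} yields (\ref{Inverse_Inequality}).

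Finally, the second assertion of the corollary follows by simply inserting the hypothesis $\lambda_\nu-\lambda_{\nu-1}\le C$ into the bound (\ref{Inverse_Inequality}) just obtained, which pulls the constant $C$ outside the sum and replaces $(\lambda_\nu-\lambda_{\nu-1})$ by $C$, giving (\ref{Inverse_Inequality_for_using}). The only delicate point in the whole argument is that the termwise bound needs two \emph{different} elementary power inequalities according to whether $\alpha p\ge 1$ or $\alpha p<1$; once the resulting estimates are seen to coincide, nothing further is required.
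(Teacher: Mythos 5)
Your argument is exactly the paper's: the authors treat the paragraph preceding the corollary as its proof, applying Theorem \ref{Inverse_Theorem} with $\varphi=\varphi_\alpha$, $\tau=\pi$, bounding each difference $\varphi_\alpha^p(\pi\lambda_\nu/\lambda_n)-\varphi_\alpha^p(\pi\lambda_{\nu-1}/\lambda_n)$ via the two Hardy--Littlewood--P\'olya power inequalities (according as $\alpha p\ge 1$ or $\alpha p<1$) together with $|\sin a-\sin b|\le|a-b|$ and monotonicity of $\{\lambda_k\}$, and then inserting $\lambda_\nu-\lambda_{\nu-1}\le C$. This matches your proposal step for step, so nothing further is needed.
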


\section{Constructive characteristics of the classes of functions defined by the generalized moduli of smoothness}

Let  $\omega$ be the function (majorant) given on  $[0,1]$. For a fixed $\alpha>0$, we set
\begin{equation} \label{omega-class}
    B{\mathcal S}^{p} H^{\omega}_{\alpha} =
    \Big\{f\in B{\mathcal S}^{p} :  \quad \omega_\alpha(f, \delta)_{_{\scriptstyle p}}=
    {\mathcal O}  (\omega(\delta)),\quad  \delta\to 0+\Big\}.
\end{equation}
Further, we consider the majorants   $\omega(\delta)$, $\delta\in [0,1]$, which satisfy the following conditions 1)--4): \noindent  {1)} $\omega(\delta)$ is continuous on $[0,1]$;\
 {  2)} $\omega(\delta)\uparrow$;\   {  3)}
$\omega(\delta)\not=0$ for $\delta\in (0,1]$;\   {  4)}~$\omega(\delta)\to 0$  for $\delta\to 0$; as well as the condition
\begin{equation} \label{B_alpha}
\quad \sum_{v=1}^n \lambda_v^{s-1}\omega\Big({1\over \lambda_v}\Big) =
{\mathcal O}  \Big[\lambda_n^s \omega \Big( {1\over \lambda_n}\Big)\Big].
\end{equation}
where $s>0$, and $\lambda_\nu$, $\nu\in {\mathbb N}$, is a increasing sequence of positive numbers.
In the case where $\lambda_\nu= \nu$, the condition $(\ref{B_alpha})$ is the known Bari condition
$({\mathscr B}_s)$ (see, e.g.
\cite{Bari_Stechkin_1956}).

 \begin{theorem}\label{Theorem 6.1}  Assume that the function $f\in B{\mathcal S}^{p}$,  $1\le p<\infty$,
  has the Fourier series of the form  (\ref{Fourier_Series}),
   $\alpha>0$ and the majorant  $\omega $ satisfies the conditions
       $1)$--\,$4)$.

       i) If  $f\in B{\mathcal S}^{p}H^{\omega}_{\alpha}$, then the following relation is true:
     \begin{equation} \label{iff-theorem}
         E_{\lambda_n}(f)_{_{\scriptstyle p}}={\mathcal O} \Big[ \omega \Big({1 \over {\lambda_n}} \Big) \Big].
      \end{equation}

      ii) If the numbers $\lambda_\nu$, $\nu\in {\mathbb N}$ satisfy condition $(\ref{Lambda_Cond})$ and
      the function $\omega^p $  satisfies condition $(\ref{B_alpha})$ with $s=\alpha p$, then relation (\ref{iff-theorem})
      yields the inclusion    $f\in B{\mathcal S}^{p}H^{\omega}_{\alpha}$.
\end{theorem}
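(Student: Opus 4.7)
The plan is to derive the two parts of Theorem \ref{Theorem 6.1} by combining the Jackson-type estimate of Corollary \ref{Theorem 2.4.} (for the direct implication) with the inverse inequality of Corollary \ref{Corollary 21} (for the converse), letting the Bari-type condition $(\ref{B_alpha})$ enter only in part (ii) to control the resulting sum.

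For part (i), I would apply inequality $(\ref{(6.16)})$ to obtain $E_{\lambda_n}(f)_{_{\scriptstyle p}}\le C_{\alpha,p}\,\omega_\alpha(f,\pi/\lambda_n)_{_{\scriptstyle p}}$, and then use the hypothesis $f\in B{\mathcal S}^{p}H^{\omega}_{\alpha}$ --- which forces $\omega_\alpha(f,\delta)_{_{\scriptstyle p}}\le K\omega(\delta)$ for all sufficiently small $\delta$ --- to conclude $E_{\lambda_n}(f)_{_{\scriptstyle p}}\le C_{\alpha,p}K\,\omega(\pi/\lambda_n)$. Since the monotonicity of $\omega$ on $[0,1]$ lets the factor $\pi$ be absorbed into the $\mathcal{O}$-constant (or, in the range $\alpha p\ge 1$, Corollary \ref{Th.2a} with a suitable $\tau\le 1$ avoids it outright), this is $(\ref{iff-theorem})$.

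For part (ii), I would invoke the bounded-gap hypothesis $(\ref{Lambda_Cond})$ so that inequality $(\ref{Inverse_Inequality_for_using})$ is available, and substitute the assumption $E_{\lambda_\nu}^p(f)\le K'\omega^p(1/\lambda_\nu)$ into it to obtain
\[
    \omega_\alpha^p\Bigl(f,\frac{\pi}{\lambda_n}\Bigr)_{_{\scriptstyle p}}
    \le \frac{\alpha p\,(2\pi)^{\alpha p}CK'}{\lambda_n^{\alpha p}}\sum_{\nu=1}^{n}\lambda_\nu^{\alpha p-1}\omega^p\!\left(\frac{1}{\lambda_\nu}\right).
\]
The Bari-type condition $(\ref{B_alpha})$, applied to the majorant $\omega^p$ with $s=\alpha p$, bounds the inner sum by a multiple of $\lambda_n^{\alpha p}\omega^p(1/\lambda_n)$, so after taking $p$-th roots we have $\omega_\alpha(f,\pi/\lambda_n)_{_{\scriptstyle p}}=\mathcal{O}(\omega(1/\lambda_n))$ along the discrete sequence $\delta_n=\pi/\lambda_n$.

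The remaining task --- promoting this discrete estimate to $\omega_\alpha(f,\delta)_{_{\scriptstyle p}}=\mathcal{O}(\omega(\delta))$ for all $\delta\to 0+$ --- is the main obstacle I foresee. For $\delta\in[\pi/\lambda_{n+1},\pi/\lambda_n)$, monotonicity in the second argument of $\omega_\alpha(f,\cdot)_{_{\scriptstyle p}}$ yields $\omega_\alpha(f,\delta)_{_{\scriptstyle p}}\le\omega_\alpha(f,\pi/\lambda_n)_{_{\scriptstyle p}}$; the bounded-gap condition $(\ref{Lambda_Cond})$ forces $\lambda_{n+1}/\lambda_n\to 1$, and a quasi-doubling of $\omega$, which one extracts from $(\ref{B_alpha})$ by comparing the Bari sums at consecutive indices, delivers $\omega(1/\lambda_n)=\mathcal{O}(\omega(\delta))$. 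Chaining these bounds gives the desired inclusion $f\in B{\mathcal S}^{p}H^{\omega}_{\alpha}$.
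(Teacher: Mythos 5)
Your proposal follows the same route as the paper's proof: part~(i) is obtained from the Jackson-type bound (\ref{(6.16)}) of Corollary \ref{Theorem 2.4.} together with the definition (\ref{omega-class}) of the class, and part~(ii) substitutes $E_{\lambda_\nu}^p(f)=\mathcal{O}(\omega^p(1/\lambda_\nu))$ into the inverse inequality (\ref{Inverse_Inequality_for_using}) and closes the estimate with the Bari-type condition (\ref{B_alpha}) for $s=\alpha p$. You are in fact more careful than the paper on two points it passes over in silence: the mismatch between the argument $\pi/\lambda_n$ produced by the direct and inverse inequalities and the argument $1/\lambda_n$ appearing in (\ref{iff-theorem}) and (\ref{B_alpha}), and the promotion of the bound $\omega_\alpha(f,\pi/\lambda_n)_{_{\scriptstyle p}}=\mathcal{O}(\omega(1/\lambda_n))$ from the discrete sequence to all $\delta\to 0+$. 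Your treatment of the latter is sound: one needs $\omega(1/\lambda_n)=\mathcal{O}(\omega(1/\lambda_{n+1}))$, and this does follow by applying (\ref{B_alpha}) at the index $n+1$, bounding the sum from below by $\omega^p(1/\lambda_n)\sum_{\nu\le n}\lambda_\nu^{\alpha p-1}$ via the monotonicity of $\omega$, and using the bounded-gap condition (\ref{Lambda_Cond}) to get $\sum_{\nu\le n}\lambda_\nu^{\alpha p-1}\ge c\,\lambda_{n+1}^{\alpha p}$ for large $n$.

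The one genuine slip is in part~(i): monotonicity of $\omega$ gives $\omega(1/\lambda_n)\le\omega(\pi/\lambda_n)$, which is the wrong direction for absorbing the factor $\pi$ into the $\mathcal{O}$-constant, and your fallback via Corollary \ref{Th.2a} with $\tau\le 1$ only covers the range $\alpha p\ge 1$. The correct (and easy) repair, valid for all $\alpha>0$, is the scaling property of the generalized modulus: since $|\sin(4u)|\le 4|\sin u|$, one has $\varphi_\alpha(4t)\le 4^{\alpha}\varphi_\alpha(t)$, hence from (\ref{general_modulus}) $\omega_\alpha(f,4\delta)_{_{\scriptstyle p}}\le 4^{\alpha}\omega_\alpha(f,\delta)_{_{\scriptstyle p}}$, and therefore $\omega_\alpha(f,\pi/\lambda_n)_{_{\scriptstyle p}}\le 4^{\alpha}\omega_\alpha(f,1/\lambda_n)_{_{\scriptstyle p}}=\mathcal{O}(\omega(1/\lambda_n))$. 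With that substitution your argument is complete and, if anything, fills in steps the published proof leaves implicit.
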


\begin{proof} Let $f \in B{\mathcal S}^{p}H^{\omega}_{\alpha}$. Then  relation   (\ref{iff-theorem})  follows from
  (\ref{omega-class}) and (\ref{(6.16)}).

 On the other hand, if
$f\in B{\mathcal S}^{p}$, the numbers $\lambda_\nu$, $\nu\in {\mathbb N}$ satisfy condition $(\ref{Lambda_Cond})$ and
      the function $\omega^p $  satisfies condition $(\ref{B_alpha})$ with $s=\alpha p$ and relation
      (\ref{iff-theorem}) holds, then by (\ref{Inverse_Inequality_for_using}), we get
\[
    \omega _\alpha^p \Big(f, \frac {1 }{\lambda_n}\Big)_{_{\scriptstyle p}}\le
     \frac{C_1 }{\lambda_n^{\alpha p}}
       \sum _{\nu =1}^{n}  \lambda_\nu^{{\alpha p}-1}  E_{\lambda_\nu}^p (f)\le
\frac{C_1 }{\lambda_n^{\alpha p}}
       \sum _{\nu =1}^{n}  \lambda_\nu^{{\alpha p}-1}   \omega^p \Big({1\over {\lambda_\nu} }\Big)=
    {\mathcal O}  \Big[\omega^p \Big( {1\over {\lambda_n}}\Big)\Big],
\]
where $C_1=\alpha p (2\pi)^{\alpha p}\cdot C$. Hence, the function    $f$ belongs to the set   $B{\mathcal S}^{p}H^{\omega}_{\alpha}$.
\end{proof}

The function $t^r$, $0<r\le \alpha,$ satisfies condition   (\ref{B_alpha}).
 Hence, denoting by $B{\mathcal S}^{p}H_{\alpha}^r$ the class $B{\mathcal S}^{p}H^{\omega}_{\alpha}$ for
  $\omega(t)=t^r$ we establish the following statement:

\begin{corollary}\label{corollary 6.1.} Let $f \in B{\mathcal S}^{p}$,  $1\le p<\infty$,  has the Fourier series of the form  (\ref{Fourier_Series}), $\alpha >0$,
$0<r\le \alpha/p$ and condition  (\ref{Lambda_Cond}) holds. The function  $f$ belongs to the set   $B{\mathcal S}^{p}H_{\alpha}^r$, iff the  following relation is true:
$$
    E_{\lambda_n}(f)_{_{\scriptstyle p}}={\mathcal O}   ({\lambda_n^{-r}} ).
$$
\end{corollary}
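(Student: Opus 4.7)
The plan is to derive Corollary~\ref{corollary 6.1.} as the power-law specialization of Theorem~\ref{Theorem 6.1} to the majorant $\omega(\delta)=\delta^{r}$. By the definition (\ref{omega-class}), the class $B{\mathcal S}^{p}H_{\alpha}^{r}$ is exactly $B{\mathcal S}^{p}H_{\alpha}^{\omega}$ with this choice of $\omega$, and the four regularity conditions 1)--4) listed just before Theorem~\ref{Theorem 6.1} (continuity on $[0,1]$, monotonicity, non-vanishing on $(0,1]$, and $\omega(0+)=0$) are trivially satisfied for any $r>0$.

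The forward direction is then immediate: applying Theorem~\ref{Theorem 6.1}(i), whose hypotheses are met, yields
\[
E_{\lambda_n}(f)_{p}={\mathcal O}\bigl(\omega(1/\lambda_n)\bigr)={\mathcal O}(\lambda_n^{-r})
\]
for every $f\in B{\mathcal S}^{p}H_{\alpha}^{r}$. For the converse I would verify that $\omega^{p}(\delta)=\delta^{rp}$ satisfies the Bari-type condition (\ref{B_alpha}) with $s=\alpha p$, which after substitution reduces to the elementary estimate
\[
\sum_{\nu=1}^{n}\lambda_\nu^{(\alpha-r)p-1}={\mathcal O}\bigl(\lambda_n^{(\alpha-r)p}\bigr).
\]
The assumption $r\le\alpha/p$ forces $(\alpha-r)p\ge 0$, so the exponent on $\lambda_\nu$ is at least $-1$; combined with condition (\ref{Lambda_Cond}) on the spectrum, which constrains $\lambda_\nu$ to grow at most linearly in $\nu$, the sum can be compared with an elementary integral to produce the required bound. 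Once (\ref{B_alpha}) is in hand, Theorem~\ref{Theorem 6.1}(ii) directly furnishes the inclusion $f\in B{\mathcal S}^{p}H_{\alpha}^{r}$.

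The main obstacle is thus the power-sum estimate encapsulating the Bari-type condition; this is essentially the only piece of genuine work in the argument, the remainder being a clean invocation of Theorem~\ref{Theorem 6.1} in both directions.
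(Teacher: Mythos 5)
Your overall route coincides with the paper's: take $\omega(t)=t^{r}$ in Theorem~\ref{Theorem 6.1}, note that conditions 1)--4) are trivially satisfied, obtain the forward implication from part (i), and reduce the converse to the Bari-type condition (\ref{B_alpha}) for $\omega^{p}(t)=t^{rp}$ with $s=\alpha p$, i.e.\ to the power-sum estimate $\sum_{\nu=1}^{n}\lambda_{\nu}^{\beta-1}={\mathcal O}(\lambda_{n}^{\beta})$ with $\beta=(\alpha-r)p$. The paper simply asserts this estimate; you try to justify it, and the justification you offer does not work. Condition (\ref{Lambda_Cond}) is an \emph{upper} bound on the gaps, hence gives $\lambda_{\nu}\le\lambda_{1}+C(\nu-1)$, i.e.\ at most linear growth --- and that is the wrong direction of control. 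What the estimate actually needs is a lower bound of the type $n={\mathcal O}(\lambda_{n})$. For example, $\lambda_{\nu}=\sqrt{\nu}$ satisfies (\ref{Lambda_Cond}) and grows at most linearly, yet for $\beta>0$ one has $\sum_{\nu=1}^{n}\lambda_{\nu}^{\beta-1}=\sum_{\nu=1}^{n}\nu^{(\beta-1)/2}$, which is of order $n^{(\beta+1)/2}=\lambda_{n}^{\beta+1}$ and hence is not ${\mathcal O}(\lambda_{n}^{\beta})$. So the "comparison with an elementary integral" fails for the sum once the increments have been discarded.

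The step can be repaired by keeping the increments: start from (\ref{Inverse_Inequality}) rather than (\ref{Inverse_Inequality_for_using}), insert $E_{\lambda_{\nu}}^{p}(f)={\mathcal O}(\lambda_{\nu}^{-rp})$, and bound $\sum_{\nu=1}^{n}\lambda_{\nu}^{\beta-1}(\lambda_{\nu}-\lambda_{\nu-1})$ by $\lambda_{n}^{\beta-1}(\lambda_{n}-\lambda_{0})=\lambda_{n}^{\beta}$ when $\beta\ge 1$, and by $\int_{0}^{\lambda_{n}}t^{\beta-1}\,{\mathrm d}t=\lambda_{n}^{\beta}/\beta$ when $0<\beta<1$ (using that $t^{\beta-1}$ is decreasing). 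This works for every increasing spectrum and makes (\ref{Lambda_Cond}) superfluous in this direction. Note also the endpoint case $p=1$, $r=\alpha$: there $\beta=0$ and $\sum_{\nu\le n}\lambda_{\nu}^{-1}\ge c\log n$ under (\ref{Lambda_Cond}), so the Bari condition genuinely fails; the paper's blanket claim that $t^{r}$ satisfies (\ref{B_alpha}) for all $0<r\le\alpha$ quietly skips both this borderline case and the growth issue above, so your attempt is no worse than the source on the second point, but the specific argument you give for the key sum would not survive scrutiny.
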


In the spaces  ${\mathcal S}^p$, for  classical moduli of smoothness $\omega_m$, theorems \ref{Inverse_Theorem}
та \ref{Theorem 6.1} were proved in \cite{Stepanets_Serdyuk_2002} and
\cite{Abdullayev_Ozkartepe_Savchuk_Shidlich_2019}. In the spaces
${\mathcal S}^p$, inequalities of the form (\ref{Inverse_Inequality_for_using})  were also obtained in \cite{Sterlin_1972}.
In spaces $L_p$ of  $2\pi$periodic  functions, Lebesgue summable with the $p$th degree, inequalities of the kind
as (\ref{Inverse_Inequality_for_using}) were obtained  by M.~Timan  (see, for example,  \cite[Ch.~6]{A_Timan_M1960}, \cite[Ch.~2]{M_Timan_M2009}). In the Musielak-Orlicz type spaces, inequalities of the kind as (\ref{S_M.12}) were proved in \cite{Abdullayev_Chaichenko_Shidlich_2021}.


\end{document}